\begin{document}
\title[Hausdorff measure version of Gallagher's theorem]{The Hausdorff measure version of Gallagher's theorem -- closing the gap and beyond}

\authormumtaz
\authordavid

\keywords{Metric Diophantine approximation, multiplicative and inhomogeneous Diophantine approximation, Jarn\'ik-type theorems, Hausdorff measure and dimension}
\date{}
\dedicatory{}

\begin{Abstract}

In this paper we prove an upper bound on the ``size'' of the set of multiplicatively $\psi$-approximable points in $\mathbb R^d$ for $d>1$ in terms of $f$-dimensional Hausdorff measure. This upper bound exactly complements the known lower bound, providing a ``zero-full'' law which relates the Hausdorff measure to the convergence/divergence of a certain series in both the homogeneous and inhomogeneous settings. This zero-full law resolves a question posed by Beresnevich and Velani (2015) regarding the ``$\log$ factor'' discrepancy in the convergent/divergent sum conditions of their theorem. We further prove the analogous result for the multiplicative doubly metric setup.
\end{Abstract}

\maketitle

\section{Introduction and Statement of results}
Let $\psi:\mathbb N\to [0, \infty)$ be a monotonically decreasing function such that $\psi(q)\to 0$ as $q\to\infty$. Such a function will be referred to as an \emph{approximating function}. Let $\bftheta:=(\theta_1,\ldots,\theta_d)\in\R^d$ be fixed throughout and let $\WW^d_{\psi,\bftheta}$ denote the set of $\xx=(x_1, \ldots, x_d)\in \R^d $ such that the inequality
\[
 \|qx_1 - \theta_1\|\cdots\|qx_d-\theta_d\| < \psi(q)
\]
is satisfied for infinitely many $q\in \N$. Here and throughout $\|\,\cdot\, \|$ denotes the distance from a real number to the nearest integer. The set  $\WW^d_{\psi,\bftheta}$ is a generalisation of the classical set of homogeneous multiplicatively $\psi$-approximable points, which is obtained by fixing $\bftheta=\0$. A point $\xx\in \WW^d_{\psi,\bftheta}$ will be referred to as \emph{multiplicatively $\psi$-approximable}. Multiplicative $\psi$-approximability is invariant under translation by integer vectors. Therefore throughout this note we will restrict our attention to the unit cube $\I^d = [0,1]^d$.

Multiplicative Diophantine approximation is currently an active area of research; this is in part due to the work associated with the celebrated Littlewood conjecture (1930), see \cite{BadziahinVelani, EKL}. The Littlewood conjecture states that $\WW^2_{q\mapsto\epsilon q^{-1},\0}=\R^2$ for all $\epsilon>0$. 

A natural question is to determine the ``size'' of the set $\WW^d_{\psi,\bftheta}\cap\I^d$ in terms of Lebesgue measure, Hausdorff measure, and Hausdorff dimension, which are natural tools for this purpose. Gallagher \cite{Gallagher2} proved the following Lebesgue measure statement for the set of homogeneous multiplicatively $\psi$-approximable points $\WW^d_{\psi,\0}$. Here and throughout, for any $s>0$, $\HH^s (X)$ stands for the $s$-dimensional Hausdorff measure of the set $X$. In the case $s=d$, the $s$-dimensional Hausdorff measure is comparable with the $d$-dimensional Lebesgue measure. For completeness, definitions of Hausdorff measure and dimension are provided below; for further details please refer to \cite{Falconer_book}.

\begin{theorem}[Gallagher 1962]\label{thm1}
Let $\psi$ be any approximating function. Then,
\[
\HH^d(\WW^d_{\psi,\0}\cap\I^d)=\begin {cases}
0 \ & {\rm if } \quad \sum\limits_{q=1}^{\infty}\psi(q)\log^{d-1}(q)<\infty. \\[3ex]
1 \ & {\rm if } \quad \sum\limits_{q=1}^{\infty}\psi(q) \log^{d-1}(q)=\infty.
\end {cases}
\]
where $\HH^d$ is normalized so that $\HH^d(\I^d) = 1$ (we will use this convention throughout the paper).
\end{theorem}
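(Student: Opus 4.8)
\textit{A proof proposal.} With the stated normalisation, $\HH^d$ restricted to $\I^d$ is just $d$-dimensional Lebesgue measure, so the assertion is a zero--one law for the $\limsup$ set
\[
 \WW^d_{\psi,\0}\cap\I^d=\limsup_{q\to\infty}A_q,\qquad A_q:=\{\xx\in\I^d:\ \|qx_1\|\cdots\|qx_d\|<\psi(q)\},
\]
and the plan is to prove the two halves of a Borel--Cantelli dichotomy for it. I would start from the volume estimate common to both directions: by $1$-periodicity of $\|q\,\cdot\,\|$ one has $\HH^d(A_q)=V_d(\psi(q))$ with $V_d(\varepsilon):=\HH^d(\{\xx\in\I^d:\|x_1\|\cdots\|x_d\|<\varepsilon\})$, and an induction on $d$ (integrating the $(d{-}1)$-dimensional estimate over the last coordinate) gives $V_d(\varepsilon)\asymp_d\varepsilon\log^{d-1}(1/\varepsilon)$ for $\varepsilon>0$ sufficiently small depending on $d$. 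A short lemma using only the monotonicity of $\psi$ --- splitting the values of $q$ according to whether $\psi(q)\ge q^{-2d}$ (and, in the complementary range, using that $\psi$ then decays fast enough for the relevant series to converge outright, while $\psi(q)<q^{-1}$ forces $\log(1/\psi(q))>\log q$) --- then shows that $\sum_q\psi(q)\log^{d-1}q$ and $\sum_q\psi(q)\log^{d-1}(1/\psi(q))$ converge or diverge together. Hence the hypotheses of the theorem are exactly $\sum_q\HH^d(A_q)<\infty$, resp.\ $=\infty$.

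The convergence half then follows immediately from the convergence Borel--Cantelli lemma, with no independence input required. For the divergence half I would run a second--moment argument localised to balls: with $N_Q(\xx):=\#\{q\le Q:\xx\in A_q\}$ one has $\int_{\I^d}N_Q\,d\HH^d=\sum_{q\le Q}\HH^d(A_q)\to\infty$, and it suffices to establish, for every ball $B\subset\I^d$, a pair correlation bound
\[
 \HH^d(A_q\cap A_{q'}\cap B)\ \ll_d\ \HH^d(A_q\cap B)\,\HH^d(A_{q'}\cap B)\ +\ E(q,q';B)\qquad(q\ne q'),
\]
with the error term $E$ negligible after summation over $q,q'\le Q$. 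Granting this, the quantitative divergence Borel--Cantelli lemma gives $\HH^d(B\cap\limsup_q A_q)\gg_d\HH^d(B)$ for all balls $B$, and the Lebesgue density theorem upgrades this to $\HH^d(\limsup_q A_q)=1$.

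The main obstacle is precisely this quasi--independence estimate, and it is where the multiplicative structure genuinely enters: unlike in the simultaneous setting, the event $A_q$ does not split over the coordinates $x_1,\dots,x_d$, so $A_q\cap A_{q'}$ is controlled by the joint distribution of $(\|qx_i\|)_{i\le d}$ and $(\|q'x_i\|)_{i\le d}$. My approach would be to decompose each factor into dyadic scales $\|qx_i\|\asymp 2^{-j_i}$: the condition $\prod_i\|qx_i\|<\psi(q)$ becomes a union, over tuples $(j_1,\dots,j_d)$ with $\sum_i j_i\approx\log(1/\psi(q))$, of products of one--dimensional conditions $\|qx_i\|\asymp 2^{-j_i}$, the number of admissible tuples being $\asymp\log^{d-1}(1/\psi(q))$ --- which is the true origin of the $\log^{d-1}$ factor --- and then to bound $\HH^d(A_q\cap A_{q'}\cap B)$ by counting, scale by scale, the solutions modulo $1$ of the resulting systems of inequalities, an elementary lattice--point estimate with a dependence on $\gcd(q,q')$. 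Keeping the accumulated error terms summable uniformly over all pairs $q\ne q'$ and all admissible scale tuples is the delicate point. (One could instead try induction on $d$: fixing $x_1,\dots,x_{d-1}$, the condition reads $\|qx_d\|<\psi(q)\big/\prod_{i<d}\|qx_i\|$, so by Fubini it would suffice to show this one--dimensional, non-monotone, problem has full measure in $x_d$ for a.e.\ $(x_1,\dots,x_{d-1})$, the inductive hypothesis supplying the extra $\log$ after a dyadic decomposition of $\prod_{i<d}\|qx_i\|$; but the scale bookkeeping remains the essential difficulty either way.)
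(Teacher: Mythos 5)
First, a point of reference: the paper does not prove this theorem at all --- it is quoted from Gallagher's 1962 paper --- so there is no internal proof to compare against, and your proposal has to stand on its own. Its convergence half does: the identity $\HH^d(A_q)=V_d(\psi(q))$ by periodicity, the estimate $V_d(\varepsilon)\asymp\varepsilon\log^{d-1}(1/\varepsilon)$, and the comparison of $\sum_q\psi(q)\log^{d-1}q$ with $\sum_q\psi(q)\log^{d-1}(1/\psi(q))$ (your split at $\psi(q)\ge q^{-2d}$ handles one direction without monotonicity; the other direction, needed for divergence, follows from monotonicity since $\sum\psi(q)<\infty$ forces $\psi(q)\le 1/q$ eventually) are all correct, and the first Borel--Cantelli lemma finishes that case.

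The divergence half, however, has a genuine gap, and it sits exactly where you flag it: the localized pair-correlation bound $\HH^d(A_q\cap A_{q'}\cap B)\lessless\HH^d(A_q\cap B)\,\HH^d(A_{q'}\cap B)+E(q,q';B)$ with error summable over $q,q'\le Q$ is never established. You describe two strategies for it and then concede that ``the scale bookkeeping remains the essential difficulty either way''; but that estimate \emph{is} the divergence case, so what you have is a framework rather than a proof. There is also concrete reason to doubt that the estimate falls to generic dyadic decomposition and lattice-point counting: the sets $\{\xx:\prod_i\|qx_i-\theta_i\|<\psi(q)\}$ have exactly the same measures $V_d(\psi(q))$, so your second-moment scheme would apply verbatim to the inhomogeneous problem --- yet the paper records that divergence statement as an open conjecture of Beresnevich--Haynes--Velani, known only for $d=2$ with one coordinate of $\bftheta$ equal to zero. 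Whatever makes the homogeneous case tractable must therefore enter your overlap estimate in an essential way, and indeed the classical arguments avoid a localized second-moment computation altogether: they route through the Cassels--Gallagher zero-one law for such limsup sets (which is what reduces ``full measure'' to ``positive measure'' and removes the need for the density-theorem step), and the positive-measure input is then extracted by fibering over coordinates rather than by controlling all pairwise overlaps. Your parenthetical induction-on-$d$ variant is closer to that route, but as you set it up it asks for a one-dimensional divergence theorem for the non-monotone function $q\mapsto\psi(q)/\prod_{i<d}\|qx_i\|$, and the Duffin--Schaeffer phenomenon shows that such a statement is false in general --- so that step, too, needs a genuine argument exploiting the specific structure of these functions, not just Fubini.
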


The monotonicity assumption on the approximating function is only needed for the divergence case, and there only when $d=1$. For higher dimensions it can be removed, see \cite{BHV3} and references therein for further details.

The analogue of Gallagher's result for the inhomogeneous multiplicative set $\WW^d_{\psi,\bftheta}$ is surprisingly incomplete. A straightforward application of the first Borel--Cantelli lemma implies that, for any approximating function $\psi$,
 \begin{equation}\label{ibclem}
\HH^d(\WW^d_{\psi,\bftheta}\cap\I^d)=0\quad {\rm if} \quad \sum\limits_{q=1}^{\infty}\psi(q)\log^{d-1}(q)<\infty.
\end{equation}
However, the divergence counterpart of \eqref{ibclem} is unknown. We attribute the following conjecture to Beresnevich, Haynes, and Velani \cite[Conjecture 2.1]{BHV4} where the statement appeared for the first time for $d=2$.

\begin{conjecture*}[Beresnevich--Haynes--Velani]
For any approximating function $\psi$,
$$\HH^d(\WW^d_{\psi,\bftheta}\cap\I^d)=1 \quad {\rm if} \quad \sum\limits_{q=1}^{\infty}\psi(q)\log^{d-1}(q)=\infty.$$
\end{conjecture*}
The only known result towards this conjecture was very recently established in \cite{BHV4}. It only covers the $d=2$ case and only for the situation when one of the coordinates of the inhomogeneous parameter is zero, i.e. when $\theta_1=0$ or $\theta_2=0$.

Naturally as a next step one would like to obtain the Hausdorff measure version of Gallagher's theorem or even of the conjectural inhomogeneous analogue. But it would be surprising to have any result of this kind when the Lebesgue inhomogeneous theory is so incomplete. Surprisingly, very recently--in 2015, Bersenevich--Velani \cite{BeresnevichVelani3} obtained a partial Hausdorff measure version of the inhomogeneous Gallagher's theorem. 

\begin{theorem}[Beresnevich--Velani 2015]\label{thm2}
 Let $\psi$ be any approximating function and $s\in(d-1, d)$ for $d\geq 2$. Then,
\begin{equation}
\label{2}
\HH^s(\WW^d_{\psi,\bftheta}\cap\I^d)=
\begin {cases}
 0 \ & {\rm if } \quad \sum\limits_{q=1}^{\infty}q^{d-s}\psi^{s-d+1}(q)\log^{d-2}(q)<\infty. \\[3ex]
 \infty \ & {\rm if } \quad \sum\limits_{q=1}^{\infty}q^{d-s}\psi^{s-d+1}(q) =\infty.
\end {cases}
\end{equation}
\end{theorem}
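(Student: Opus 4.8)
I would prove the two directions of the dichotomy separately; the real work is in the convergence half. \emph{Divergence.} The plan is to locate inside $\WW^d_{\psi,\bftheta}\cap\I^d$ a $\limsup$ set of neighbourhoods of rational hyperplanes and hit it with a mass transference principle. Since $\|t\|\le\tfrac12$ for all real $t$, any $\xx$ with $\|qx_1-\theta_1\|<2^{d-1}\psi(q)$ automatically satisfies $\prod_{i=1}^d\|qx_i-\theta_i\|<\psi(q)$, so
\[
\WW^d_{\psi,\bftheta}\cap\I^d\ \supseteq\ \limsup_{q\to\infty}\Delta_q ,
\]
where $\Delta_q$ is the $\tfrac{2^{d-1}\psi(q)}{q}$-neighbourhood inside $\I^d$ of the rational hyperplanes $\{x_1=(p+\theta_1)/q:p\in\mathbb Z\}$. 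Write $s=(d-1)+s'$ with $s'\in(0,1)$. The ``thick'' version of this $\limsup$ set, in which each slab is widened to radius $\tfrac1{2q}$, equals $\I^d$ at every level $q$ and so has full Lebesgue measure; the mass transference principle for systems of linear forms (equivalently, the classical one-dimensional inhomogeneous Jarn\'ik--Besicovitch theorem applied slice-wise in the $x_1$-coordinate) then yields $\HH^s(\limsup_q\Delta_q)=\HH^s(\I^d)=\infty$ as soon as $\sum_q q\bigl(2^{d-1}\psi(q)/q\bigr)^{s'}\asymp\sum_q q^{1-s'}\psi(q)^{s'}=\infty$. Since $1-s'=d-s$ and $s'=s-d+1$, this is exactly the divergence hypothesis, whence $\HH^s(\WW^d_{\psi,\bftheta}\cap\I^d)=\infty$.

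\emph{Convergence.} Here I would run the Hausdorff--Cantelli lemma on an efficient covering. Put $E_q:=\{\xx\in\I^d:\prod_{i=1}^d\|qx_i-\theta_i\|<\psi(q)\}$, so $\WW^d_{\psi,\bftheta}\cap\I^d\subseteq\bigcap_N\bigcup_{q\ge N}E_q$; localising to the $\asymp q^d$ sub-cubes of side $1/q$ and rescaling, $E_q$ is covered by $\asymp q^d$ translated copies of the hyperbolic region
\[
\mathcal H_q:=\bigl\{(z_1,\dots,z_d):\ |z_i|\le\tfrac1{2q}\ (1\le i\le d),\ \ \textstyle\prod_{i=1}^d|z_i|<\psi(q)/q^d\bigr\}.
\]
The crucial point is to cover $\mathcal H_q$ by a \emph{multi-scale} family of balls rather than at a single scale: tile the cube of side $1/q$ by the dyadic boxes $\prod_{i=1}^d\varepsilon_i[2^{-j_i-1}/q,2^{-j_i}/q]$ ($j_i\ge1$, $\varepsilon_i=\pm1$), retain all boxes with $j_1+\dots+j_d>\log_2(1/\psi(q))-d$ (these cover $\mathcal H_q$ up to the $\HH^s$-negligible set where some coordinate vanishes), and cover each retained box by $\asymp 2^{\,d\max_i j_i-(j_1+\dots+j_d)}$ balls of radius equal to its shortest side $2^{-\max_i j_i}/q$. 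Summing the $s$-th powers of these radii over a single copy $\mathcal H_q$ gives
\[
\sum_{B}r(B)^s\ \asymp\ q^{-s}\!\!\sum_{\substack{j_1,\dots,j_d\ge1\\ j_1+\dots+j_d>\log_2(1/\psi(q))-d}}\!\! 2^{\,(d-s)\max_i j_i-(j_1+\dots+j_d)},
\]
and since $d-s$ and $s-d+1$ both lie in $(0,1)$ this geometric-type sum evaluates to $\asymp_{d,s}\psi(q)^{s-d+1}$. Hence a cover of $E_q$ costs $\asymp q^d\cdot q^{-s}\psi(q)^{s-d+1}=q^{d-s}\psi(q)^{s-d+1}$, and $\sum_{q\ge N}q^{d-s}\psi(q)^{s-d+1}\to0$ as $N\to\infty$ because the hypothesis $\sum_q q^{d-s}\psi(q)^{s-d+1}\log^{d-2}q<\infty$ holds. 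By the Hausdorff--Cantelli lemma, $\HH^s(\WW^d_{\psi,\bftheta}\cap\I^d)=0$.

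\emph{Where the difficulty lies.} The substance is the multi-scale covering and, in particular, the combinatorial estimate $\sum 2^{(d-s)\max_i j_i-(j_1+\dots+j_d)}\asymp\psi(q)^{s-d+1}$, whose delicate part is to sum only over the dyadic boxes that actually meet the hyperbolic region. One must avoid the wasteful alternative of peeling off a single coordinate by a dyadic decomposition and covering each $(d-1)$-dimensional hyperbolic cross-section crudely via its volume $\asymp(\text{parameter})(\log)^{d-2}$: that injects a spurious $\log^{d-2}q$ and merely recovers Theorem~\ref{thm2} as stated. The multi-scale covering loses no logarithm, so it in fact establishes the sharp criterion $\sum_q q^{d-s}\psi(q)^{s-d+1}<\infty\Rightarrow\HH^s(\WW^d_{\psi,\bftheta}\cap\I^d)=0$, which exactly complements the divergence half. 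On the divergence side the only fine points are to invoke the one-dimensional inhomogeneous theory in a form that does not require monotonicity of $q\mapsto q^{1-s'}\psi(q)^{s'}$, and to extract \emph{infinite} rather than merely positive $\HH^s$-measure from the product/slice structure — both supplied by the ``full measure on every subinterval'' output of the mass transference principle.
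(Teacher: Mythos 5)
Your proposal is correct and follows essentially the same route as the paper: the paper does not prove Theorem~\ref{thm2} directly (it is quoted from \cite{BeresnevichVelani3}) but instead establishes the stronger Theorem~\ref{thm3}/Corollary~\ref{cor3}, whose convergence half is exactly your multi-scale dyadic covering of the hyperbolic region (Lemma~\ref{lem2}, including the same change-of-variables observation that kills the $\log^{d-2}$ factor) and whose divergence half is the same lift of the one-dimensional inhomogeneous Jarn\'ik theorem (Bugeaud) via the product structure and the Slicing Lemma. Your remark that the argument in fact yields the sharp convergence criterion without the logarithm is precisely the content of Corollary~\ref{cor3}.
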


It is important to note that the above result used the monotonicity assumption on the approximating function for both the convergence and divergence cases. The assumption that $s\in(d-1,d)$ is natural as it can be readily verified that
\begin{itemize}
\item when $s\leq d-1$, $\HH^s(\WW^d_{\psi,\bftheta}\cap\I^d)=\infty$ irrespective of the approximating function $\psi$. 
\item when $s>d, \HH^s(\WW^d_{\psi,\bftheta}\cap\I^d)=0$ irrespective of the approximating function $\psi$.
\item when $s=d$, the Beresnevich--Haynes--Velani conjecture is contradictory to \eqref{2} and thus we do not expect \eqref{2} to hold when $s=d$.
\end{itemize}

The convergent sum condition in Theorem \ref{thm2} carries an extra log factor when compared to the divergent sum condition. This means that there are some approximating functions $\psi$ for which neither condition holds, such as the functions
\[
\psi(q) = \left(q^{d-s+1} \log^\alpha(q)\right)^{-1/(s-d+1)} \;\;\;\;\; 1 < \alpha \leq d-1,
\]
and thus Theorem \ref{thm2} is insufficient to compute the Hausdorff measure of $\WW^d_{\psi,\bftheta}\cap\I^d$ for these functions. Beresnevich and Velani describe the situation as follows (see \cite[Remark 1.2]{BeresnevichVelani3})\footnote{Note that in \cite{BeresnevichVelani3}, $n$ was used instead of $d$.}:
\medskip

``{\it Thus there is a discrepancy in the above `$s$-volume' sum conditions for convergence and divergence when $n>2$. In view of this it remains an interesting open problem to determine the necessary and sufficient condition for $\HH^s(\WW^n_{\psi,\bftheta}\cap\I^n)$ to be zero or infinite in higher dimensions.}''

\medskip

Theorem \ref{thm3}, given below, proves that the log factor in Theorem \ref{thm2} is redundant for any $d$. It is in fact more powerful as the convergence case is free from any monotonicity assumption on the approximating function for any $d$. The monotonicity is only needed for the divergence case. The results are strengthened further by considering the Hausdorff measure of $\WW^d_{\psi,\bftheta}\cap\I^d$ with respect to a general \emph{dimension function} $f$. A dimension function is an increasing continuous function $f:(0,\infty)\to (0,\infty)$ such that $f(r)\to 0$ as $r\to 0$. We need to impose the following technical restrictions on $f$:
\begin{itemize}
\item[(I)] there exist $s\in (d-1,d)$ and $C > 0$ such that
\begin{equation}
\label{fbound}
f(y) \leq C(y/x)^s f(x) \;\;\;\;\forall  \ 0 < x < y.
\end{equation}
\item[(II)] the map $x\mapsto x^{-d+1} f(x)$ is monotonically increasing.
\end{itemize}
Obviously, both (I) and (II) are satisfied for the dimension function $f(x) = x^s$ whenever $s\in (d-1,d)$. They are also satisfied for more general functions such as $f(x) = x^s \log^\alpha(x) \log^\beta\log(x)$ (where $\alpha,\beta\in\R$), etc.

\begin{theorem}\label{thm3}
 Fix $\psi:\N\to\Rplus$ and a dimension function $f$ satisfying \text{(I)} and \text{(II)}. Then
 $$\HH^f(\WW^d_{\psi,\bftheta}\cap\I^d)=\begin {cases}
 0 \ & {\rm if } \quad \sum\limits_{q=1}^{\infty} q^d \psi^{-d+1}(q) f\left(\frac{\psi(q)}{q}\right)<\infty. \\[3ex]
 \infty \ & {\rm if } \quad\sum\limits_{q=1}^{\infty} q^d \psi^{-d+1}(q) f\left(\frac{\psi(q)}{q}\right) =\infty \text{ and $\psi$ is monotonic}.
 \end {cases}$$
\end{theorem}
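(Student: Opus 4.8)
The plan is to treat the two cases separately. \textbf{Convergence.} Writing $E_q=E_q(\psi,\bftheta)$ for the set of $\xx\in\I^d$ satisfying $\prod_{i=1}^d\|qx_i-\theta_i\|<\psi(q)$, we have $\WW^d_{\psi,\bftheta}\cap\I^d=\limsup_{q\to\infty}E_q$, so by the Hausdorff--Cantelli lemma it suffices to produce, for each $q$, a cover $\mathcal C_q$ of $E_q$ by sets of small diameter with $\sum_q\sum_{U\in\mathcal C_q}f(\operatorname{diam}U)\lesssim\sum_q q^d\psi^{-d+1}(q)f(\psi(q)/q)<\infty$. A preliminary observation, using (II) (together with (I)), is that if $\psi(q)\ge 1$ infinitely often then the series already diverges; hence in the convergence case we may assume $\psi(q)\to 0$. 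For fixed $q$ put $\delta=\psi(q)$ and $L=\lceil\log_2(1/\delta)\rceil$. The set $E_q$ is a union of $\asymp q^d$ translated copies of $\tfrac1qR(\delta)$, where $R(\delta)=\{t\in[-\tfrac12,\tfrac12]^d:|t_1\cdots t_d|<\delta\}$, so it is enough to cover $R(\delta)$ efficiently. I would cover $R(\delta)$ by (a) the $d$ coordinate slabs $\{|t_i|\le\delta\}$, each of which lies in $R(\delta)$ and is covered by $\asymp\delta^{-(d-1)}$ cubes of side $\delta$; and (b) the dyadic boxes $A_{\mathbf j}=\{|t_i|\in(2^{-j_i},2^{-j_i+1}]\ \forall i\}$ for multi-indices $\mathbf j=(j_1,\dots,j_d)\in\{1,\dots,L\}^d$ with $j_1+\dots+j_d\ge L$, each covered by $2^{d\max\mathbf j-\sum j_i}$ cubes of side $2^{-\max\mathbf j}$. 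After rescaling by $1/q$, part (a) contributes $\lesssim q^d\delta^{-(d-1)}f(\delta/q)\asymp q^d\psi^{-d+1}(q)f(\psi(q)/q)$, the required bound. For part (b), I would split the sum according to which coordinate attains $\max\mathbf j$, evaluate the inner geometric sum over the remaining coordinates (this produces a polynomial factor $(L-\max\mathbf j+1)^{d-2}$), and then use (I) to compare $f(2^{-(L-r)}/q)$ with $f(2^{-L}/q)$ where $r=L-\max\mathbf j$; after this substitution the sum over $r$ becomes $\sum_{r\ge 0}(r+1)^{d-2}2^{r(s-d)}$, which converges precisely because $s<d$, so part (b) also contributes $\lesssim q^d\psi^{-d+1}(q)f(\psi(q)/q)$. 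Summing over $q$ gives the convergence case. This is exactly where the improvement over Theorem~\ref{thm2} comes from: the constraint $s<d$ turns the sum responsible for the spurious $\log^{d-2}$ factor into a convergent geometric series.

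\textbf{Divergence.} Here I would use a slicing reduction to dimension one. Since $\|qx_i-\theta_i\|\le\tfrac12$ for every $i$, any $\xx$ with $\|qx_d-\theta_d\|<\psi(q)$ satisfies $\prod_{i=1}^d\|qx_i-\theta_i\|\le 2^{-(d-1)}\psi(q)<\psi(q)$, whence
\[
\WW^d_{\psi,\bftheta}\cap\I^d\ \supseteq\ \I^{d-1}\times W_1,\qquad W_1:=\{x\in\I:\|qx-\theta_d\|<\psi(q)\ \text{for infinitely many }q\}.
\]
Put $g(x):=x^{-d+1}f(x)$; by (II) this is a dimension function, and (I) shows $g(y)\le C(y/x)^{s-d+1}g(x)$ with exponent $s-d+1\in(0,1)$, so in particular $g(r)/r\to\infty$ as $r\to 0$. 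A direct computation gives the identity $\sum_q q\,g(\psi(q)/q)=\sum_q q^d\psi^{-d+1}(q)f(\psi(q)/q)$, so the divergence hypothesis is literally the divergence condition of the one-dimensional inhomogeneous Jarn\'ik theorem for $W_1$ with gauge $g$; since $\psi$ is monotonic, that theorem yields $\HH^g(W_1)=\HH^g(\I)=\infty$. Finally I would deduce $\HH^f(\I^{d-1}\times W_1)=\infty$ by a standard product argument: for each $M$ choose, via a Frostman-type lemma (whose hypotheses are built into the proof of the Jarn\'ik theorem), a measure $\mu$ on $W_1$ with $\mu(W_1)\ge M$ and $\mu(B(x,r))\lesssim g(r)$; then $\mu\times\lambda^{d-1}$, with $\lambda^{d-1}$ Lebesgue measure on $\I^{d-1}$, satisfies $(\mu\times\lambda^{d-1})(B(z,r))\lesssim r^{d-1}g(r)=f(r)$, so the mass distribution principle gives $\HH^f(\I^{d-1}\times W_1)\gtrsim M$; letting $M\to\infty$ finishes the case. (If $\psi(q)\ge 1$ infinitely often then $\WW^d_{\psi,\bftheta}\cap\I^d=\I^d$ and $\HH^f(\I^d)=\infty$, consistent with the series diverging, so this degenerate situation is automatically covered.)

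\textbf{Main obstacle.} The substantive part is the convergence direction, specifically the construction of the near-optimal cover of the hyperbolic region $R(\delta)$ and the bookkeeping of the resulting multi-index sum: getting the truncations and constants right, and checking that hypotheses (I) and (II) are exactly what force every piece of the cover to collapse to a single term comparable to $q^d\psi^{-d+1}(q)f(\psi(q)/q)$, is where the care lies. By contrast the divergence direction is comparatively soft once one notices the slicing inclusion $\WW^d_{\psi,\bftheta}\cap\I^d\supseteq\I^{d-1}\times W_1$ and that the series in the theorem is precisely the one-dimensional Jarn\'ik series for the gauge $x^{-d+1}f(x)$; the only mild technical points there are invoking the inhomogeneous Jarn\'ik theorem and the Frostman/product machinery for a general dimension function rather than a pure power.
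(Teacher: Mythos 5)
Your proposal is correct and follows essentially the same route as the paper: the convergence case rests on the same dyadic covering of the hyperbolic region (your boxes $A_{\mathbf j}$ are a cosmetic variant of the cover constructed in Lemma \ref{lem2}, producing the same polynomial factor $(L-\max\mathbf j+1)^{d-2}$ which is absorbed into a convergent geometric series precisely because $s<d$), and the divergence case is the same slicing reduction to Bugeaud's one-dimensional inhomogeneous Jarn\'ik theorem (Theorem \ref{thm5}) with the gauge $g(x)=x^{-d+1}f(x)$. The only deviation is that where you re-derive the product step by hand via a Frostman measure on $W_1$ crossed with Lebesgue measure, the paper simply cites the Slicing Lemma (Lemma \ref{lemmaslicing}); both arguments are correct and carry the same content.
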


\begin{corollary}\label{cor3}
 Fix $\psi:\N\to\Rplus$ and $s\in (d-1,d)$. Then
 $$\HH^s(\WW^d_{\psi,\bftheta}\cap\I^d)=\begin {cases}
 0 \ & {\rm if } \quad \sum\limits_{q=1}^{\infty} q^{d-s} \psi^{s-d+1}(q) <\infty. \\[3ex]
 \infty \ & {\rm if } \quad\sum\limits_{q=1}^{\infty} q^{d-s} \psi^{s-d+1}(q) =\infty \text{ and $\psi$ is monotonic}.
 \end {cases}$$
\end{corollary}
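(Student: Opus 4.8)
Corollary~\ref{cor3} is the case $f(x)=x^{s}$ of Theorem~\ref{thm3}: for $s\in(d-1,d)$ the gauge $f(x)=x^{s}$ trivially satisfies \textup{(I)} and \textup{(II)}, and $q^{d}\psi^{-d+1}(q)f(\psi(q)/q)=q^{d-s}\psi^{s-d+1}(q)$, so the two series coincide. It therefore suffices to prove the two halves of Theorem~\ref{thm3}; I would use a refined covering argument for the convergence case and a short product (``fibering'') argument for the divergence case.

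\medskip\noindent\textbf{Convergence.} Write $E_{q}:=\{\mathbf{x}\in\I^{d}:\|qx_{1}-\theta_{1}\|\cdots\|qx_{d}-\theta_{d}\|<\psi(q)\}$, so $\WW^{d}_{\psi,\bftheta}\cap\I^{d}=\limsup_{q}E_{q}$. By the Hausdorff-measure form of the convergence Borel--Cantelli lemma ($\sum_{q}\HH^{f}_{\infty}(A_{q})<\infty\Rightarrow\HH^{f}(\limsup_{q}A_{q})=0$, with $\HH^{f}_{\infty}$ the Hausdorff content) it is enough to show $\sum_{q}\HH^{f}_{\infty}(E_{q})<\infty$. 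Since $\|qx_{i}-\theta_{i}\|<\tfrac12$ pins $qx_{i}-\theta_{i}$ to within $\tfrac12$ of a unique integer, $E_{q}$ is the union, over the $\le(q+1)^{d}$ integer vectors $\mathbf{p}$ with $0\le p_{i}\le q$, of the translate by $\big(\tfrac{p_{1}+\theta_{1}}{q},\dots,\tfrac{p_{d}+\theta_{d}}{q}\big)$ of the scaled hyperbolic region
\[
\Delta_{q}=\Big\{\mathbf{u}\in[-\tfrac1{2q},\tfrac1{2q}]^{d}:\,|u_{1}\cdots u_{d}|<\psi(q)/q^{d}\Big\},
\]
so $\HH^{f}_{\infty}(E_{q})\ll q^{d}\,\HH^{f}_{\infty}(\Delta_{q})$. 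Everything reduces to the sharp content bound
\[
\HH^{f}_{\infty}(\Delta_{q})\ \ll\ \psi(q)^{-d+1}\,f\!\big(\psi(q)/q\big)\qquad\text{(crucially, \emph{with no logarithmic factor})},
\]
since then $\sum_{q}\HH^{f}_{\infty}(E_{q})\ll\sum_{q}q^{d}\psi^{-d+1}(q)f(\psi(q)/q)<\infty$. To prove the content bound I would first peel off the $d$ ``extreme slabs'' $\{|u_{i}|\le\psi(q)/q\}\cap\Delta_{q}$, each of which sits in a box of shape $\tfrac{\psi(q)}{q}\times(\tfrac1q)^{d-1}$ and is covered by $\asymp\psi(q)^{-d+1}$ cubes of side $\asymp\psi(q)/q$, contributing exactly the claimed order; and then cover the remaining region $\{|u_{i}|>\psi(q)/q\ \forall i\}\cap\Delta_{q}$ by a dyadic decomposition according to the largest coordinate, covering each dyadic block by cubes of its shortest side and recursing on $d$. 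The one essential twist --- and precisely the place where the spurious logarithm of Theorem~\ref{thm2} disappears --- is to run this dyadic decomposition with a \emph{large constant} ratio $\rho=\rho(C,s,d)$ (for $f(x)=x^{s}$ the ratio $2$ already works, since $2^{s-d}<1$): condition \textup{(I)}, which makes $f$ grow strictly more slowly than $x^{s}$ with $s<d$, then forces every geometric sum over dyadic blocks to be dominated by its largest term, so no factor of order $\log(1/\psi(q))$ accumulates, while condition \textup{(II)} controls the small-scale core. Carrying this covering bookkeeping through cleanly and uniformly in $q$ is the heart of the matter.

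\medskip\noindent\textbf{Divergence.} Here $\psi$ is monotonic and $\sum_{q}q^{d}\psi^{-d+1}(q)f(\psi(q)/q)=\infty$. I would exploit the elementary inclusion
\[
\I^{d-1}\times F^{0}\ \subseteq\ \WW^{d}_{\psi,\bftheta}\cap\I^{d},\qquad F^{0}:=\{x\in\I:\|qx-\theta_{d}\|<\psi(q)\text{ for infinitely many }q\},
\]
which holds because $\|qx_{d}-\theta_{d}\|<\psi(q)$ together with $\|qx_{i}-\theta_{i}\|\le\tfrac12\ (i<d)$ gives $\prod_{i=1}^{d}\|qx_{i}-\theta_{i}\|<2^{-(d-1)}\psi(q)<\psi(q)$ for the same infinitely many $q$. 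The set $F^{0}$ is a classical one-dimensional \emph{inhomogeneous} $\limsup$ set with the \emph{monotone} approximating function $\psi$, so the one-dimensional inhomogeneous Jarn\'ik theorem, applied with the dimension function $g(x):=x^{-(d-1)}f(x)$ (which is a valid gauge thanks to \textup{(I)}--\textup{(II)}), gives $\HH^{g}(F^{0})=\infty$ as soon as $\sum_{q}q\,g(\psi(q)/q)=\sum_{q}q^{d}\psi^{-d+1}(q)f(\psi(q)/q)=\infty$ --- our hypothesis. Feeding a Jarn\'ik/Frostman measure on $F^{0}$ against Lebesgue measure on $\I^{d-1}$ (equivalently, the product lower bound for Hausdorff measures, or the Marstrand slicing inequality applied to the product; see~\cite{Falconer_book}) then yields $\HH^{f}(\WW^{d}_{\psi,\bftheta}\cap\I^{d})\gg\HH^{g}(F^{0})=\infty$. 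This also pinpoints the role of the hypotheses: monotonicity of $\psi$ is needed only for the one-dimensional Jarn\'ik input, and the whole mechanism degenerates at $s=d$, where $g$ would be essentially linear and the product inclusion merely recovers ``$\sum_{q}\psi(q)=\infty$'' in place of the correct Gallagher series $\sum_{q}\psi(q)\log^{d-1}q=\infty$ --- which is exactly why the inhomogeneous \emph{Lebesgue} statement remains open while its Hausdorff ($s<d$) refinement is within reach. In short, I expect the divergence half to be comparatively soft, with essentially all the difficulty of Theorem~\ref{thm3} lying in the logarithm-free content estimate for $\Delta_{q}$.
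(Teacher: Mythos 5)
Your proposal is correct and follows essentially the same route as the paper: the corollary is the case $f(x)=x^{s}$ of Theorem \ref{thm3}; the convergence half reduces to a logarithm-free covering bound for the scaled hyperbolic region (the paper's Lemma \ref{lem2}, proved by exactly your dyadic decomposition according to the largest coordinate, with the geometric sum dominated by its largest blocks so that the $(\log)^{d-2}$ factor is absorbed); and the divergence half is the same fibering $\WW^{1}_{\psi,\theta_i}\times\I^{d-1}\subset\WW^{d}_{\psi,\bftheta}$ combined with Bugeaud's inhomogeneous Jarn\'ik theorem for $g(x)=x^{-(d-1)}f(x)$ and the Slicing Lemma. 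The only differences are cosmetic (Hausdorff content plus the content form of Borel--Cantelli in place of fine covers plus the Hausdorff--Cantelli lemma).
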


\textbf{Note:} The methods needed to prove the divergence case are exactly the same as in \cite{BeresnevichVelani3} but since these are very short we include them for completeness. \\

{\bf Acknowledgements.}  The first-named author was supported by the Endeavour Fellowship--Department of Education and Training, Australia.  The second-named author was supported by the EPSRC Programme Grant EP/J018260/1. Part of this work was carried out when the second-named author visited the University of Newcastle. We are thankful to the  University of Newcastle and Australian Mathematical Sciences Institute (AMSI) for travel support. We would like to thank the anonymous referee for useful comments.

\section{Proofs}

For completeness we give below a very brief introduction to Hausdorff measures and dimension. For further details see \cite{Falconer_book}.

Let
$S\subset \R^d$.
 Then for any $\rho>0$, any finite or countable collection~$\{B_i\}$ of subsets of $\R^d$ with diameters $\mathrm{diam} (B_i)\le \rho$ such that
$S\subset \bigcup_i B_i$ is called a \emph{$\rho$-cover} of $S$.
Let
\[
\HH_\rho^f(S)=\inf \sum_i f\left(\diam (B_i)\right),
\]
where the infimum is taken over all possible $\rho$-covers $\{B_i\}$ of $S$. The \textit{Hausdorff $f$-measure of $S$} is defined to be
\[
\HH^f(S)=\lim_{\rho\to 0}\HH_\rho^f(S).
\]
The map $\HH^f:\P(\R^d)\to [0,\infty]$ is a Borel measure. In the case that $f(r)=r^s \;\; (s\geq 0)$, the measure $\HH^f$ is denoted $\HH^s$ and is called \emph{$s$-dimensional Hausdorff measure}. For any set $S \subset \R^d$ one can easily verify that there exists a unique critical value of $s$ at which the function $s\mapsto\HH^s(S)$ ``jumps'' from infinity to zero. The value taken by $s$ at this discontinuity is referred to as the \textit{Hausdorff dimension} of $S$ is denoted by $\dim_{\HH} S $; i.e.
\[
\dim_\HH S :=\inf\{s\geq 0\;:\; \HH^s(S)=0\}.
\]
The countable collection $\{B_i\}$ is called a \emph{fine cover} of $S$ if for every $\rho>0$ it contains a subcollection that is a $\rho$-cover of $S$.

\medskip

There are many benefits of a characterisation of $\WW^d_{\psi,\bftheta}$ using Hausdorff measure. As stated above, one such benefit is that such a characterisation implies a formula for the Hausdorff dimension of $\WW^d_{\psi,\bftheta}$ (see \cite[\sectionsymbol5 and \sectionsymbol12.7]{BDV}). In particular, let $\tau$ be the lower order at infinity of $1/\psi$, that is,
\[\tau:=\liminf_{q\to\infty}\frac{\log(1/\psi(q))}{\log q}.\]
Then Theorem \ref{thm3} (and in fact the weaker Theorem \ref{thm2}) implies that for any approximating function $\psi$ with lower order at infinity $\tau$, we have
\[
s_0:=\dim_\HH \WW^d_{\psi,\bftheta}=\begin {cases}
 d \ & {\rm if } \quad \tau \leq 1. \\[3ex]
 d+\frac{1-\tau}{1+\tau} \ & {\rm if } \quad \tau>1.
 \end {cases}
\]
Note that the limiting dimension $\lim_{\tau\to\infty}\dim_\HH \WW^d_{\psi,\bftheta} = d-1$ is never zero unless $d=1$. In fact, Theorem \ref{thm2} reveals much more than just the Hausdorff dimension: it can readily be verified that \[\HH^{s_0}(\WW^d_{\psi,\bftheta}\cap\I^d)=\infty.\]

\subsection{Proof of Theorem \ref{thm3}: the convergence case} We first state the Hausdorff measure version of the famous Borel--Cantelli lemma (see \cite[Lemma 3.10]{BernikDodson}) which will allow us to estimate the Hausdorff measure of certain sets via calculating the Hausdorff $f$-sum of a fine cover.

\begin{lemma}[Hausdorff--Cantelli]\label{bclem}
Let $\{B_i\}\subset\R^d$ be a fine cover of a set $S$ and let $f$ be a dimension function such that
\begin{equation}
\label{fdimcost}
\sum_i f\left(\diam(B_i)\right) \, < \, \infty.
\end{equation}
Then $$\HH^f(S)=0.$$
\end{lemma}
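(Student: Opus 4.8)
The plan is to work directly from the definition $\HH^f(S)=\lim_{\rho\to 0}\HH_\rho^f(S)$ and show that, for every $\epsilon>0$, one has $\HH_\rho^f(S)<\epsilon$ once $\rho$ is small enough. The only inputs are the fine cover $\{B_i\}$ and the hypothesis \eqref{fdimcost}, and the whole argument consists of using the fine-cover subcollections as admissible competitors in the infimum defining $\HH_\rho^f$, then controlling their $f$-cost by a tail of the convergent series.

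First I would set $a_i:=f(\diam(B_i))\ge 0$, so that \eqref{fdimcost} says $\sum_i a_i<\infty$ and hence its tails vanish: for each $\epsilon>0$ there is $N$ with $\sum_{i>N}a_i<\epsilon$. Next, fix $\rho>0$ and let $J_\rho:=\{i:\diam(B_i)\le\rho\}$. By the definition of a fine cover, $\{B_i\}$ contains a subcollection $\{B_i:i\in I_\rho\}$ which is a $\rho$-cover of $S$; since each of its members has diameter at most $\rho$, we have $I_\rho\subset J_\rho$. Because this subcollection is one admissible choice in the infimum defining $\HH_\rho^f(S)$, and $f\ge 0$, this yields
\[
\HH_\rho^f(S)\;\le\;\sum_{i\in I_\rho}a_i\;\le\;\sum_{i\in J_\rho}a_i.
\]

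It then remains to show the right-hand side tends to $0$ as $\rho\to 0$. Given $\epsilon>0$, I would choose $N$ as above and set $\rho_0:=\min\{\diam(B_i):i\le N,\ \diam(B_i)>0\}$ (taking $\rho_0$ arbitrary if this set is empty). For any $\rho<\rho_0$, no index $i\le N$ of positive diameter lies in $J_\rho$, while the indices $i\le N$ with $\diam(B_i)=0$ contribute $a_i=0$; hence $\sum_{i\in J_\rho}a_i\le\sum_{i>N}a_i<\epsilon$. Combined with the displayed bound this gives $\HH_\rho^f(S)<\epsilon$ for all $\rho<\rho_0$, so $\HH^f(S)=\lim_{\rho\to 0}\HH_\rho^f(S)\le\epsilon$, and letting $\epsilon\to 0$ forces $\HH^f(S)=0$.

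The proof is essentially routine, so I do not anticipate a serious obstacle; the one point deserving care is conceptual rather than computational, namely aligning the fine-cover subcollection $I_\rho$ that the hypothesis hands us with the arbitrary $\rho$-covers ranged over in the infimum, and making sure the tail bookkeeping is not disturbed by any members $B_i$ of zero diameter (which lie in every $J_\rho$ but cost nothing). The crucial mechanism to emphasise is that the fine-cover property forces the competitor covers at scale $\rho$ to be drawn from sets of diameter at most $\rho$, whose total $f$-cost is precisely a tail of the convergent series \eqref{fdimcost}.
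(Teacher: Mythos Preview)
Your argument is correct and is the standard proof of the Hausdorff--Cantelli lemma. Note that the paper itself does not supply a proof of this statement; it merely states the lemma and cites \cite[Lemma~3.10]{BernikDodson}. What you have written is essentially the argument one finds in that reference: use the fine-cover subcollection at scale $\rho$ as an admissible competitor in the infimum defining $\HH_\rho^f(S)$, and then observe that its $f$-cost is dominated by a tail of the convergent series \eqref{fdimcost}.
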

In what follows we will call the series \eqref{fdimcost} the \emph{$f$-dimensional cost} of the collection $\{B_i\}$. Note that if $\{B_i\}$ is only a cover and not a fine cover, then there is no necessary relation between the $f$-dimensional cost and $f$-dimensional Hausdorff measure.

The next lemma will be key in forming a fine cover of $\WW^d_{\psi,\bftheta}\cap \I^d$ and thus estimating its $f$-Hausdorff measure.

\begin{lemma}\label{lem2}
Fix $s \in (d-1,d)$ and for each $0 < r \leq 1$ let
\[
M(r) = \left\{\xx\in\R^d : |x_i| \leq 1, \; \prod_i |x_i| \leq r\right\}.
\]
Then $M(r)$ can be covered by a collection of $d$-dimensional hypercubes $\{B_i\}$ satisfying
\begin{align} \label{diambound}
\diam(B_i) &\geq r\\ \label{costbound}
\sum_i\diam^s(B_i) &\lessless r^{s-d+1}.
\end{align}
\end{lemma}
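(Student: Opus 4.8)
The plan is to cover the ``hyperbolic'' region $M(r)$ by a union of dyadic boxes and then split each dyadic box into cubes. First I would observe that it suffices to cover the part of $M(r)$ lying in the positive orthant, since $M(r)$ is symmetric under sign changes of the coordinates and there are only $2^d$ orthants. For a dyadic decomposition, for each tuple $\mathbf{j} = (j_1,\dots,j_d)$ of nonnegative integers write $I_{\mathbf{j}} = \prod_{i=1}^d [2^{-j_i-1}, 2^{-j_i}]$ (with the convention that the last dyadic interval runs down to $0$). The key point is that if $I_{\mathbf{j}}$ meets $M(r)$ then its ``lower corner'' satisfies $\prod_i 2^{-j_i-1} \leq r$, i.e. $2^{-|\mathbf{j}|} \leq 2^d r$, where $|\mathbf{j}| := \sum_i j_i$; conversely every such $I_{\mathbf{j}}$ is entirely contained in $M(2^d r)$. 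So the relevant index set is $\mathcal{J} = \{\mathbf{j} : 2^{-|\mathbf{j}|} \leq 2^d r\}$, i.e. $|\mathbf{j}| \geq \log_2(1/r) - d$.

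Next I would cut each dyadic box $I_{\mathbf{j}}$ into congruent cubes of side $\ell(\mathbf{j}) := \min_i 2^{-j_i} = 2^{-\max_i j_i}$; this takes $\prod_i (2^{-j_i}/\ell(\mathbf{j})) = 2^{\,d\max_i j_i - |\mathbf{j}|}$ cubes, each of diameter $\sqrt{d}\,\ell(\mathbf{j})$. To get the lower bound \eqref{diambound} I would first discard all boxes $I_{\mathbf{j}}$ with $\ell(\mathbf{j}) < r$ and cover the (bounded) leftover piece of $M(r)$ by $O(1)$ cubes of diameter $\asymp r$ — wait, that leftover piece is not bounded away from the coordinate hyperplanes, so instead I would simply enlarge every cube of side smaller than $r$ up to side $r$ (which only helps the covering and changes the cost estimate by a bounded factor once we check the sum over the enlarged cubes is still controlled). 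The cleanest route: keep the cubes of side $\ell(\mathbf{j}) \geq r$ as they are, and for the remaining boxes note $\sum_{\mathbf{j} : \ell(\mathbf{j}) < r} \mathrm{vol}(I_{\mathbf{j}}) \leq \mathrm{vol}(M(Cr))$ for a suitable constant, and cover that sub-region by $\asymp r^{-d}\,\mathrm{vol}(M(Cr))$ cubes of side exactly $r$; since $\mathrm{vol}(M(\rho)) \asymp \rho \log^{d-1}(1/\rho)$, this contributes $\asymp r^{-d}\cdot r\log^{d-1}(1/r)\cdot r^s = r^{s-d+1}\log^{d-1}(1/r) \lessless r^{s-d+1}$ to the cost (absorbing the log into the implied constant is illegitimate; instead I will observe $d-1 < s < d$ gives us room — but cleaner still is to iterate the dyadic construction all the way down rather than truncate, so that no log appears; see below).

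For the cost bound \eqref{costbound}, summing diameters to the $s$-th power over the cubes coming from $I_{\mathbf{j}}$ gives, up to the constant $d^{s/2}$,
\[
2^{\,d\max_i j_i - |\mathbf{j}|}\cdot 2^{-s\max_i j_i} = 2^{-|\mathbf{j}|}\,2^{-(s-d)\max_i j_i}.
\]
Since $s - d < 0$ and $\max_i j_i \geq |\mathbf{j}|/d$, this is at most $2^{-|\mathbf{j}|}\,2^{-(s-d)|\mathbf{j}|/d} = 2^{-(1 + (s-d)/d)|\mathbf{j}|} = 2^{-\frac{s}{d}|\mathbf{j}|}$, which decays geometrically in $|\mathbf{j}|$; wait — $s/d < 1$ but the number of tuples $\mathbf{j}$ with a given value of $|\mathbf{j}| = N$ is $\binom{N+d-1}{d-1} \asymp N^{d-1}$, so the total contribution at level $N$ is $\lessless N^{d-1} 2^{-sN/d}$, and summing over all $N \geq N_0 := \lceil \log_2(1/r) - d\rceil$ gives $\lessless N_0^{d-1} 2^{-sN_0/d} \asymp \log^{d-1}(1/r)\, r^{s/d}$. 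That is not $r^{s-d+1}$, so the crude bound $\max_i j_i \geq |\mathbf{j}|/d$ is too lossy. The fix is to sum more carefully: fix $m = \max_i j_i$ and sum over the remaining coordinates freely with $|\mathbf{j}| = N$ fixed, giving $\sum_{N \geq N_0} 2^{-N}\sum_{m} 2^{-(s-d)m}\cdot\#\{\mathbf{j}: \max = m, |\mathbf{j}| = N\}$; carrying out this combinatorial sum (the inner count is supported on $N/d \le m \le N$ and the geometric factor $2^{(d-s)m}$ is maximized at $m = N$, contributing $2^{(d-s)N}$ times a polynomial-in-$N$ count) yields total $\lessless \sum_{N\ge N_0} 2^{-N} 2^{(d-s)N} N^{d-2} = \sum_{N\ge N_0} 2^{-(s-d+1)N} N^{d-2}$, and since $s - d + 1 > 0$ this geometric-type sum is $\lessless 2^{-(s-d+1)N_0} N_0^{d-2} \asymp r^{s-d+1}\log^{d-2}(1/r)$.

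The main obstacle — and the one genuinely delicate point — is exactly this: the naive dyadic cover produces a spurious $\log^{d-2}(1/r)$ factor, matching the extraneous logarithm in Theorem \ref{thm2}. To remove it and obtain the clean bound $\lessless r^{s-d+1}$ of \eqref{costbound}, I would not use cubes of a single size per dyadic box, but rather choose the cube side in $I_{\mathbf{j}}$ adaptively — namely of side comparable to $\bigl(r\,/\,\prod_i 2^{-j_i}\bigr)^{1/(d-1)}\cdot(\text{something})$, engineered so that the $s$-cost per box is $\asymp 2^{-(s-d+1)|\mathbf{j}|}$ exactly (no room for a polynomial factor to build up), using condition that $s \in (d-1,d)$ to guarantee the exponents work out. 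Concretely: in the box $I_{\mathbf{j}}$ the set $M(r)$ forces one coordinate to be $\leq r\cdot 2^{|\mathbf{j}|}$ times the product of the box-scales of the others, so only a thin slab of $I_{\mathbf{j}}$ is used, and covering that slab by cubes of the appropriate (small) side gives a cost that is summable in $\mathbf{j}$ without any logarithmic loss. This slab-refinement is precisely the new ingredient that ``closes the gap'', so I would present it as the heart of the argument, with the dyadic bookkeeping above as scaffolding.
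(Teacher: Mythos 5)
There are two genuine gaps here, and they are linked. First, your evaluation of the $s$-cost is wrong in a way that manufactures a nonexistent obstacle. At a fixed level $N = |\mathbf{j}|$ you need to bound $\sum_{m} 2^{(d-s)m}\,\#\{\mathbf{j} : \max_i j_i = m,\ |\mathbf{j}| = N\}$. You observe that the count is polynomial in $N$ and that the weight $2^{(d-s)m}$ peaks at $m = N$, and conclude that the level contributes $\lessless 2^{(d-s)N}N^{d-2}$; but that multiplies the maximum of the weight by the maximum of the count, and these are attained at opposite ends of the range of $m$. The count at $m$ is $\lessless (N-m+1)^{d-2}$, so substituting $\ell = N - m$ the level sum is $2^{(d-s)N}\sum_{\ell \geq 0}(\ell+1)^{d-2}2^{-(d-s)\ell} \asymp 2^{(d-s)N}$, with \emph{no} polynomial factor, because the count is large only where the exponential weight is exponentially small. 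Summing $2^{-(s-d+1)N}$ over $N \geq N_0$ then gives $r^{s-d+1}$ directly. This substitution is exactly the trick the paper uses (there at the single level $\sum_i k_i = N-d$); the ``spurious $\log^{d-2}(1/r)$'' you diagnose does not actually arise, and the ``adaptive slab refinement'' you present as the heart of the argument is a cure for a problem introduced by your own miscalculation --- and in any case it is only gestured at (``engineered so that\dots'', ``the appropriate (small) side''), with no construction and no cost estimate, so it cannot stand as a proof.

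Second, the diameter lower bound \eqref{diambound} is never secured. Your decomposition runs over all levels $|\mathbf{j}| \geq N_0$, and a deep cell such as $\mathbf{j} = (N,0,\dots,0)$ with $N$ large is cut into cubes of side $2^{-N} \ll r$. Each of your three proposed repairs (enlarging small cubes; truncating plus a volume count; ``iterating all the way down'') is abandoned mid-sentence, and the last one makes \eqref{diambound} false outright. The paper sidesteps this entirely by covering $M(r)$ with the nested corner boxes $\prod_i[-2^{-k_i},2^{-k_i}]$ indexed only by $\sum_i k_i = N-d$: these absorb all deeper dyadic cells while forcing $\max_i k_i \leq N-d$, so every cube produced has side $2^{-\max_i k_i} \geq 2^{d-N} \geq r$ and \eqref{diambound} is automatic. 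You would need either to adopt that single-level covering or to supply a genuine merging argument for the deep cells; as written, neither \eqref{diambound} nor \eqref{costbound} is established.
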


The notation $\lessless$ is used to indicate an inequality with an unspecified positive multiplicative constant. If $a\lessless b$ and $a\gtrgtr b$ we write $a\asymp b$, and say that the quantities $a$ and $b$ are \emph{comparable}.

\begin{proof}[Proof of Theorem \ref{thm3}: the convergence case modulo Lemma \ref{lem2}]
We will assume that
\[
\sum\limits_{q=1}^{\infty} q^d \psi^{-d+1}(q) f\left(\frac{\psi(q)}{q}\right) <\infty.
\]
For each $q\in\N$ let
\[
Z_q = \{\pp\in\Z^d : -1 \leq p_i + \theta_i < q + 1 \text{ for all $i$}\}.
\]
It is easily verified that
\[
\WW^d_{\psi,\bftheta}\cap \I^d= \bigcap_{N=1}^\infty\bigcup_{q=N}^\infty\bigcup_{\pp\in Z_q} \frac{1}{q}\big(\pp + \bftheta + M(\psi(q))\big).
\]
Thus the collection
\[
\left\{\frac{1}{q}\big(\pp + \bftheta + M(\psi(q))\big) : q\in \N ,\; \pp\in Z_q\right\}
\]
is a fine cover of $\WW^d_{\psi,\bftheta}\cap \I^d$. Now for each $q$, let $\{B_{q,i} : i = 1,\ldots,N_q\}$ be a covering of $M(\psi(q))$ by $d$-dimensional hypercubes as in Lemma \ref{lem2}. Then the collection
\[
\left\{\frac{1}{q}\big(\pp + \bftheta + B_{q,i}\big) : q\in \N ,\; \pp\in Z_q,\; i = 1,\ldots,N_q\right\}
\]
is also a fine cover of $\WW^d_{\psi,\bftheta}\cap \I^d$. The $f$-dimensional cost of this cover is
\begin{align*}
&\sum_{q\in\N} \sum_{\pp\in Z_q} \sum_{i = 1}^{N_q} f\left(\diam\left(\frac{1}{q}\big(\pp + \bftheta + B_{q,i}\big)\right)\right)\\
&\lessless \sum_{q\in\N} \sum_{\pp\in Z_q} \sum_{i = 1}^{N_q} \left(\frac{\diam\big(\frac{1}{q}\big(\pp + \bftheta + B_{q,i}\big)\big)}{\psi(q)/q}\right)^s f\left(\frac{\psi(q)}{q}\right) \by{\eqref{fbound} and \eqref{diambound}}\\
&= \sum_{q\in\N} (q+2)^d \sum_{i = 1}^{N_q} \left(\frac{\diam(B_{q,i})}{\psi(q)}\right)^s f\left(\frac{\psi(q)}{q}\right)\\
&\lessless \sum_{q\in\N} q^d \left(\frac{\psi^{s-d+1}(q)}{\psi^s(q)}\right) f\left(\frac{\psi(q)}{q}\right) \by{\eqref{costbound}}\\
&= \sum\limits_{q=1}^{\infty} q^d \psi^{-d+1}(q) f\left(\frac{\psi(q)}{q}\right).
\end{align*}
By assumption, this series converges and thus by the Hausdorff--Cantelli lemma (Lemma \ref{bclem}), we have $\HH^f(\WW^d_{\psi,\bftheta}\cap \I^d) = 0$.
\end{proof}

Finally to complete the proof of the convergence case of Theorem \ref{thm3} the only thing remains to show is to prove Lemma \ref{lem2}.

\begin{proof}[Proof of Lemma \ref{lem2}]
Without loss of generality assume that $r \leq 2^{-d}$. Let $N\geq d$ be the largest integer such that $r \leq 2^{-N}$. Let
\[
S = \{\kk = (k_1,\ldots,k_d) \in \Z^d : k_1,\ldots,k_d \geq 0, \; k_1 + \ldots + k_d = N - d\}
\]
and for each $\kk\in S$ let
\[
B(\kk) = \prod_{i = 1}^d [-2^{-k_i},2^{-k_i}].
\]
Fix $\xx\in M(r)$, and for each $i$, let $k_i$ be the largest integer such that $|x_i| \leq 2^{-k_i}$. Since $|x_i| \leq 1$, $k_i \geq 0$, and since $\prod_i |x_i| \leq r$, $\sum_i (k_i + 1) \geq N$. Thus there exists $\kk' \in S$ such that $k'_i \leq k_i$ for all $i$, and thus
\[
M(r) \subset \bigcup_{\kk\in S} B(\kk).
\]
Fix $\kk\in S$ such that $\max(k_1,\ldots,k_d) = k_1$. Then for each $i$, the interval $[-2^{-k_i},2^{-k_i}]$ can be written as the essentially disjoint union of $2^{k_1-k_i+1}$ intervals of length $2^{-k_1}$. Thus, $B(\kk)$ can be written as the essentially disjoint union of
\[
\prod_{i = 1}^d 2^{k_1-k_i+1} = 2^{d k_1 - \sum_i k_i + d} = 2^{d k_1 - N}
\]
hypercubes of side length $2^{-k_1}$. In general, for all $\kk\in S$, $B(\kk)$ can be written as the essentially disjoint union of $2^{d k_{\max} - N}$ hypercubes of side length $2^{-k_{\max}}$, where $k_{\max} = \max(k_1,\ldots,k_d)$. Denote this collection of hypercubes by $\{B_{\kk,i} : i = 1,\ldots,2^{d k_{\max} - N}\}$. Then
\begin{equation}
\label{coveringMr}
\{B_{\kk,i} : \kk\in S, \; i = 1,\ldots,2^{d k_{\max} - N}\}
\end{equation}
is a covering of $M(r)$ by hypercubes of diameter $\geq r$.

To complete the proof, we must show that the $s$-dimensional cost of the covering \eqref{coveringMr} is $\lessless r^{s-d+1}$. And indeed,
\begin{align*}
\sum_{\kk\in S} \sum_{i = 1}^{2^{d k_{\max} - N}} \diam^s(B_{\kk,i})
&\asymp \sum_{\kk\in S} 2^{d k_{\max} - N} (2^{-k_{\max}})^s\\
&\asymp \sum_{k_1 = 0}^{N - d} \sum_{\substack{0 \leq k_2,\ldots,k_d \leq k_1 \\ \sum_i k_i = N - d}} 2^{(d - s) k_1 - N} \by{symmetry}\\
&\leq \sum_{k=0}^{N - d} \#\Big\{k_2,\ldots,k_d \geq 0 : \sum_{i \geq 2} k_i = N - d - k\Big\} 2^{(d - s) k - N} \noreason\\
&\asymp \sum_{k = 0}^{N - d} (N - d - k)^{d - 2} 2^{(d - s) k - N}\\
&= \sum_{\ell = 0}^{N - d} \ell^{d - 2} 2^{(d - s) (N - d - \ell) - N} \note{letting $\ell = N - d - k$}\\
&\leq 2^{(d - s) N - N} \sum_{\ell = 0}^\infty \ell^{d - 2} 2^{-(d - s)(d + \ell)} \note{converges since $s < d$}\\
&\asymp 2^{(d - s) N - N}=\left(2^{-N}\right)^{s-d+1} \asymp r^{s - d + 1}. \ \ 
\end{align*}
This completes the proof of Lemma \ref{lem2}.

\end{proof}

\begin{remark}
We can see the factor which corresponds to the ``log discrepancy'' appear in the calculation on the fourth line, namely the factor $(N - d - k)^{d - 2}$. The trick to getting rid of it is the change of variables $\ell = N - d - k$, which makes the factor into part of a convergent series which is independent of $N$. Heuristically, the reason this trick works is that the asymptotic $(N - d - k)^{d - 2} \asymp |\log(r)|^{d - 2}$ is only valid for small values of $k$, and for values of $k$ closer to $N-d$, the asymptotic $(N - d - k)^{d - 2} \asymp 1$ is more accurate. Since the second factor in the sum is exponentially increasing with $k$, the largest possible values of $k$ contribute the most to the sum.
\end{remark}

\subsection{Proof of Theorem \ref{thm3}: the divergence case}

The proof will proceed using the following ``slicing lemma'' for Hausdorff measures, see \cite[Proposition 7.9]{Falconer_book} or \cite[Lemma 4]{BeresnevichVelani}.

\begin{lemma}[Slicing Lemma]
\label{lemmaslicing}
Fix $k,l\in\N$ with $l < k$, and let $g$ be a dimension function and let $f(r) = r^\ell g(r)$ (note that $f$ is necessarily a dimension function). Let $A$ be a Borel subset of $\R^k$ and suppose that the set
\[
\big\{x \in \R^\ell: \HH^g(\{y\in \R^{k-\ell} : (x,y)\in A\}) = \infty\big\}
\]
has positive $\HH^\ell$-measure. Then $\HH^f(A) = \infty$.
\end{lemma}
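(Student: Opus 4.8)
The plan is to run the classical slicing (``Fubini for Hausdorff content'') argument, organised so as to dodge the usual measurability subtleties. Write a point of $\R^k$ as $(x,y)$ with $x\in\R^\ell$ and $y\in\R^{k-\ell}$, let $\pi\colon\R^k\to\R^\ell$ be the projection onto the first $\ell$ coordinates, and for $x\in\R^\ell$ put $A_x=\{y\in\R^{k-\ell}:(x,y)\in A\}$. Throughout we use freely that on $\R^\ell$ the measure $\HH^\ell$ is comparable to Lebesgue measure $\mathcal L^\ell$. Suppose, for a contradiction, that $\HH^f(A)<\infty$, and set $M:=\HH^f(A)+1<\infty$.

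The first step is the slicing inequality. Fix $\rho>0$ and pick a $\rho$-cover $\{B_i\}$ of $A$ with $\sum_i f(\diam B_i)\le \HH^f_\rho(A)+1\le M$. For each $x\in\R^\ell$, the sets $(B_i)_x:=\{y:(x,y)\in B_i\}$ with $x\in\pi(B_i)$ form a $\rho$-cover of $A_x$ and satisfy $\diam (B_i)_x\le \diam B_i$; since $g$ is increasing,
\[
\HH^g_\rho(A_x)\ \le\ \sum_{i\,:\,x\in\pi(B_i)} g\big(\diam B_i\big)\ =:\ h_\rho(x),
\]
where $h_\rho$ is a nonnegative measurable function on $\R^\ell$. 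Since $\pi(B_i)$ has diameter $\le\diam B_i$ it is contained in a ball of that radius, so $\mathcal L^\ell(\pi(B_i))\lessless (\diam B_i)^\ell$; hence, by Tonelli's theorem and the relation $f(r)=r^\ell g(r)$,
\[
\int_{\R^\ell} h_\rho\,d\mathcal L^\ell\;=\;\sum_i g(\diam B_i)\,\mathcal L^\ell\big(\pi(B_i)\big)\;\lessless\;\sum_i (\diam B_i)^\ell g(\diam B_i)\;=\;\sum_i f(\diam B_i)\;\le\; M .
\]
Consequently the upper integral satisfies $\int^{*}_{\R^\ell}\HH^g_\rho(A_x)\,d\mathcal L^\ell(x)\lessless M$ for every $\rho>0$, with an implied constant depending only on $k$ and $\ell$.

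The second step is to let $\rho\downarrow0$. For each $x$ the quantity $\HH^g_\rho(A_x)$ increases to $\HH^g(A_x)$ as $\rho\downarrow0$, so applying the monotone convergence theorem for upper integrals along $\rho=1/n$ yields $\int^{*}_{\R^\ell}\HH^g(A_x)\,d\mathcal L^\ell(x)\lessless M<\infty$. In particular $\HH^g(A_x)<\infty$ for $\mathcal L^\ell$-almost every $x$, so the set $\{x\in\R^\ell:\HH^g(A_x)=\infty\}$ is $\mathcal L^\ell$-null, hence $\HH^\ell$-null, contradicting the hypothesis. I expect the only genuinely delicate point to be this last interchange of limit and integral: one must invoke the monotone convergence theorem in the form valid for an increasing sequence of (not necessarily measurable) nonnegative functions through upper integrals and measurable envelopes, rather than the usual version for measurable functions — the Borelness of $A$ in the hypothesis is precisely what guarantees that all the slices and relevant functions are well behaved, although the arrangement above really only uses the (obvious) measurability of the dominating functions $h_\rho$. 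Everything else — the slicing inequality, the projection volume bound, and Tonelli — is routine, and one may of course simply cite \cite[Proposition 7.9]{Falconer_book} instead.
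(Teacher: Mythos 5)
The paper does not actually prove this lemma: it is quoted verbatim from the literature, with the reader referred to \cite[Proposition 7.9]{Falconer_book} and \cite[Lemma 4]{BeresnevichVelani}. What you have written is a correct, self-contained version of the standard argument behind those citations (bound each sliced Hausdorff content $\HH^g_\rho(A_x)$ by the measurable majorant $h_\rho(x)=\sum_{i:x\in\pi(B_i)}g(\diam B_i)$, integrate, use $\mathcal L^\ell(\pi(B_i))\lessless(\diam B_i)^\ell$ and $f(r)=r^\ell g(r)$, then let $\rho\to0$), so in substance you are doing more than the paper does rather than something different. Two small points are worth tightening. First, the measurability of $h_\rho$ is not automatic, since $B_i$ ranges over arbitrary sets and $\pi(B_i)$ could fail to be Lebesgue measurable; replace each $B_i$ by its closure (this changes neither the diameters nor the covering property, and the projection of a closed set is $\sigma$-compact, hence Borel), or work throughout with outer measure of $\pi(B_i)$. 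Second, the limit interchange you flag as delicate can be avoided entirely: for fixed $T>0$, Chebyshev applied to $h_{1/n}$ gives that the outer measure of $\{x:\HH^g_{1/n}(A_x)>T\}$ is $\lessless M/T$ uniformly in $n$; these sets increase to a set containing $\{x:\HH^g(A_x)=\infty\}$, and continuity from below of Lebesgue outer measure on increasing sequences then gives outer measure $\lessless M/T$ for that set, whence letting $T\to\infty$ yields the desired contradiction with the positivity hypothesis. (Alternatively, monotone convergence for upper integrals is valid and can be proved by taking measurable majorants $g_n\geq f_n$ with $\int g_n\leq\int^*f_n+\varepsilon$ and passing to $\inf_{m\geq n}g_m$.) With either repair your proof is complete, and your observation that Borelness of $A$ is not really needed for this argument is accurate.
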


Let $\psi$ be a decreasing function and let $\theta_1\in\R$ be fixed. Consider the set of inhomogeneous $\psi$-approximable real numbers
\[
\WW^1_{\psi,\theta_1} = \left\{x_1\in \R : \; \|qx_1 - \theta_1\| < \psi(q)\quad {\rm for \ i. m. } \ q\in \N\right\}.
\]

 In \cite{Bugeaud5}, Bugeaud proved the following zero--infinity law for $\WW^1_{\psi,\theta_1} $.
 
 \begin{theorem}[Bugeaud 2004]\label{thm5}
 Let $\psi$ be an approximating function and let $g$ be a dimension function. Then
 $$\HH^g(\WW^1_{\psi,\theta_1} )=\begin {cases}
 0 \ & {\rm if } \quad \sum\limits_{r=1}^{\infty}rg\left(\frac{\psi(r)}{r}\right)<\infty. \\[3ex]
 \infty \ & {\rm if } \quad \sum\limits_{r=1}^{\infty}rg\left(\frac{\psi(r)}{r}\right)=\infty.
 \end {cases}$$
\end{theorem}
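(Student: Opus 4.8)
The plan is to prove the two halves separately. The convergence half is a one-line application of the Hausdorff--Cantelli lemma (Lemma \ref{bclem}); the divergence half I would obtain by combining the Mass Transference Principle of Beresnevich and Velani with the classical inhomogeneous Khintchine theorem on the line.

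\emph{Convergence.} Since $\WW^1_{\psi,\theta_1}$ is invariant under translation by integers, it is a countable union of integer translates of $\WW^1_{\psi,\theta_1}\cap\I$, so it is enough to show $\HH^g(\WW^1_{\psi,\theta_1}\cap\I)=0$. For each $r\in\N$ the set $\{x\in\I:\|rx-\theta_1\|<\psi(r)\}$ lies in a union of $\lessless r$ intervals of length $2\psi(r)/r$ centred at the points $(p+\theta_1)/r$; bisecting each of them gives a cover by intervals of diameter $\psi(r)/r$. Ranging over all $r$ yields a fine cover of $\WW^1_{\psi,\theta_1}\cap\I$ of $g$-dimensional cost $\lessless\sum_{r}r\,g(\psi(r)/r)<\infty$, and Lemma \ref{bclem} finishes this direction. (Monotonicity of $\psi$ is not needed here, only $\psi(r)\to 0$.)

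\emph{Divergence.} Again by translation invariance, and by countable additivity of the Borel measure $\HH^g$ over the disjoint pieces $\WW^1_{\psi,\theta_1}\cap[n,n+1)$, it suffices to prove $\HH^g(\WW^1_{\psi,\theta_1}\cap\I)>0$. I would realise $\WW^1_{\psi,\theta_1}\cap\I$ as the $\limsup$ of the balls $B_{r,p}:=B\big((p+\theta_1)/r,\ \psi(r)/r\big)$ ($r\in\N$, $0\le p\le r$) and apply the Mass Transference Principle: if the ``$g$-enlarged'' family $B_{r,p}^g:=B\big((p+\theta_1)/r,\ g(\psi(r)/r)\big)$ has $\limsup$ of full Lebesgue measure in $\I$, then $\HH^g(\WW^1_{\psi,\theta_1}\cap\I)=\HH^g(\I)$. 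Since $\limsup_{r,p}B_{r,p}^g=\WW^1_{\Psi,\theta_1}\cap\I$ with $\Psi(r):=r\,g(\psi(r)/r)$, the problem reduces to showing that $\WW^1_{\Psi,\theta_1}$ has full Lebesgue measure; by the inhomogeneous Khintchine theorem this follows from $\sum_r\Psi(r)=\sum_r r\,g(\psi(r)/r)=\infty$, which is exactly the hypothesis, as soon as $\Psi$ may be taken monotonic.

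\emph{The main obstacle.} The difficulty is exactly that last proviso: $\Psi(r)=r\,g(\psi(r)/r)$ is a product of the increasing factor $r$ with the decreasing factor $g(\psi(r)/r)$ and so need not be monotone, whereas the divergence part of the inhomogeneous Khintchine theorem is classically stated for monotone approximating functions. I would deal with this in the routine way: pass to the monotone minorant $\underline\Psi(r)=\min_{r'\le r}\Psi(r')$; if $\sum_r\underline\Psi(r)=\infty$ apply Khintchine to $\underline\Psi$ and use $\WW^1_{\underline\Psi,\theta_1}\subseteq\WW^1_{\Psi,\theta_1}$, while if the divergence is carried entirely by the ``spikes'' of $\Psi$ above $\underline\Psi$ one runs a Borel--Cantelli / quasi-independence argument along a sparse sequence of $r$ on which $\Psi$ is large. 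One must also record the structural hypothesis on $g$ (e.g.\ monotonicity of $r^{-1}g(r)$) under which the Mass Transference Principle is being invoked, and the normalisation making $\HH^g(\I)>0$. I expect this to be the only genuine work, with no deeper idea required; alternatively, following Bugeaud's original approach, one can avoid the Mass Transference Principle altogether and build directly a Cantor-type subset of $\WW^1_{\psi,\theta_1}\cap\I$ carrying a probability measure $\mu$ with $\mu(B(x,\rho))\lessless g(\rho)$ for all small $\rho$ --- the divergence of $\sum_r r\,g(\psi(r)/r)$ being exactly what guarantees that sufficiently many well-separated rationals $(p+\theta_1)/r$ lie in each interval of the construction --- and conclude by the mass distribution principle.
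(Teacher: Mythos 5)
First, a point of comparison: the paper does not prove Theorem \ref{thm5} at all --- it is imported verbatim from Bugeaud \cite{Bugeaud5} and used as a black box in the divergence half of Theorem \ref{thm3} --- so any argument you give is necessarily ``a different route from the paper''. Your convergence half is correct and complete: at each level $r$ the relevant set is covered by $\lessless r$ intervals of diameter $\psi(r)/r$, the resulting fine cover of $\WW^1_{\psi,\theta_1}\cap\I$ has $g$-dimensional cost $\lessless\sum_r r\,g(\psi(r)/r)$, and Lemma \ref{bclem} together with countable subadditivity over integer translates finishes it.

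The divergence half has a genuine gap exactly where you locate it, and the patch you propose does not close it. The function $\Psi(r)=r\,g(\psi(r)/r)$ need not be monotone, and the Khintchine--Sz\"usz divergence theorem genuinely requires monotonicity. Your fallback for the case where $\sum_r\Psi(r)=\infty$ but the monotone minorant has convergent sum is ``a Borel--Cantelli / quasi-independence argument along a sparse sequence''; but pairwise quasi-independence of the sets $E_r=\{x\in\I:\|rx-\theta_1\|<\Psi(r)\}$ fails for rational $\theta_1$ (in particular $\theta_1=0$, which the theorem must cover): if $r\mid r'$ the centres at level $r$ recur among those at level $r'$, giving $|E_r\cap E_{r'}|\gtrgtr\min\bigl(|E_r|,\tfrac{r}{r'}|E_{r'}|\bigr)$, which for small $\Psi$ vastly exceeds $|E_r|\,|E_{r'}|$ even along lacunary sequences --- this is the Duffin--Schaeffer obstruction, and sparseness does not remove it. Any valid argument in this case must exploit the special structure of $\Psi$ (e.g.\ that $\Psi(r)/r=g(\psi(r)/r)$ is decreasing), not generic quasi-independence. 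The standard correct resolutions are: (i) split into the case $\limsup_r\Psi(r)>0$, where infinitely many families of $g$-enlarged balls occupy a fixed proportion of every interval and the limsup has full measure by a Lebesgue density argument with no appeal to Khintchine, and the case $\Psi(r)\to0$, where a monotone comparison function can be manufactured (this is how Beresnevich--Velani deduce Jarn\'ik-type from Khintchine-type statements via the MTP); or (ii) Bugeaud's original route via ubiquity / a Cantor construction and the mass distribution principle, which takes the divergence of $\sum_r r\,g(\psi(r)/r)$ as direct input and bypasses Khintchine entirely. You mention (ii) only as an undeveloped aside, so as written the divergence half is not a proof. (You are also right that one must assume $r^{-1}g(r)\to\infty$, or treat $g(r)\lessless r$ separately, both for the MTP and for the conclusion ``$=\infty$'' to be meaningful; as literally stated the theorem fails for, say, $g(r)=r^2$.)
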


 As in \cite{BeresnevichVelani3}, we notice that $\WW^1_{\psi,\theta_1} \times \R^{d-1}\subset \WW^d_{\psi,\bftheta}$ for any $\bftheta \in \R^d$. Thus, $$\HH^f\left(\WW^d_{\psi,\bftheta}\cap \I^d\right)\geq \HH^f\left((\WW^1_{\psi,\theta_1} \cap\I) \times \I^{d-1}\right)$$
for any dimension function $f$. Now if $f$ satisfies (II), then the function $g$ defined by the equation $f(r)=r^{d-1}g(r)$ is a dimension function, and the divergence case of Theorem \ref{thm5} implies that $\HH^g(\WW^1_{\psi,\theta_1} \cap \I)=\infty$. Now using the Slicing Lemma, we obtain that
 
 $$\HH^f\left(\WW^d_{\psi,\bftheta}\cap \I^d\right)=\infty \quad {\rm if} \quad \sum\limits_{r=1}^{\infty} r^d \psi^{-d+1}(r) f\left(\frac{\psi(r)}{r}\right)= \sum\limits_{r=1}^{\infty}rg\left(\frac{\psi(r)}{r}\right)=\infty.$$

\section{Remarks on the doubly metric case}

Recall that throughout this paper, so far, we have fixed the inhomogeneous parameter $\bftheta\in \I^d$. If $\bftheta$ is not fixed and is allowed to vary then the resulting setup is slightly different than what has been discussed above. Such a setup is commonly known as \emph{doubly metric}. Let $\psi$ be an approximating function. Consider the doubly metric set
\[
\WW^d_\psi = \{(\bftheta,\xx) \in \I^{2d}: \xx\in \WW^d_{\psi,\bftheta}\}.
\]
With the help of the first and second Borel-Cantelli lemmas, it is easy to show that for any function $\psi:\N\to \CO 0\infty$,
 \begin{equation}\label{dibclem}
\HH^{2d}(\WW^d_\psi\cap\I^{2d}) =
\begin{cases}
0 & {\rm if} \quad \sum\limits_{q=1}^{\infty}\psi(q)\log^{d-1}(q)<\infty.\\
1 & {\rm if} \quad \sum\limits_{q=1}^{\infty}\psi(q)\log^{d-1}(q) = \infty.
\end{cases}
\end{equation}
This theorem can be proven in the same manner as in  \cite[Chapter VII, \61--4]{Cassels}. It can be readily verified that the proof of the divergence case follows upon replacing the function $\delta_q$ defined on p.123  in \cite{Cassels} with the appropriate multiplicative version and then redoing the calculations. The reason that the doubly metric case is easier to deal with than the singly metric case is the fact that the inhomogeneous variable offers an extra degree of freedom which, in this instance, makes calculations a lot easier than the situation when the inhomogeneous parameter is fixed. 
However, to the best of our knowledge nothing is known regarding the measure-theoretic properties of $\WW^d_\psi$ with respect to Hausdorff measures. 

\medskip

Now we turn our attention to the Hausdorff measure of the set $\WW^d_\psi$. Without much effort we prove the following doubly metric analogue of Theorem \ref{thm3}.

\begin{theorem}
\label{thm6}
Fix $\psi:\N\to\R$ and a dimension function $f$ satisfying \text{(I)} and \text{(II)} of Theorem \ref{thm3}. Let $F(x) = x^d f(x)$. Then
\[
\HH^F(\WW^d_\psi\cap\I^{2d})=\begin {cases}
0 \ & {\rm if } \quad \sum\limits_{q=1}^{\infty} q^d \psi^{-d+1}(q) f\left(\frac{\psi(q)}{q}\right)<\infty. \\[3ex]
\infty \ & {\rm if } \quad \sum\limits_{q=1}^{\infty} q^d \psi^{-d+1}(q) f\left(\frac{\psi(q)}{q}\right) =\infty \text{ and $\psi$ is monotonic}.
\end {cases}
\]
\end{theorem}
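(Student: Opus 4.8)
The plan is to prove the two cases separately, in each reducing to the singly metric results already established. Throughout, set $F(x)=x^d f(x)$. One first checks that $F$ is a bona fide dimension function on $\R^{2d}$: it satisfies \text{(I)} with the exponent $s+d\in(2d-1,2d)$ (multiply the inequality \eqref{fbound} for $f$ by $(y/x)^d$), and it satisfies the analogue of \text{(II)} relative to the ambient dimension $2d$, since $x\mapsto x^{-2d+1}F(x)=x^{-d+1}f(x)$ is exactly the map required to be monotonically increasing in \text{(II)} for $f$. In particular the Hausdorff--Cantelli lemma (Lemma~\ref{bclem}) and the Slicing Lemma (Lemma~\ref{lemmaslicing}) are available for $F$.

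\emph{The convergence case.} Assume $\sum_q q^d\psi^{-d+1}(q)f(\psi(q)/q)<\infty$. Just as in the singly metric setting, $(\bftheta,\xx)\in\WW^d_\psi\cap\I^{2d}$ forces, for infinitely many $q$, the existence of $\pp$ in a set $Z_q\subset\Z^d$ with $|Z_q|\asymp q^d$ such that $\xx\in\frac1q(\pp+\bftheta+M(\psi(q)))$. Hence the sets $\big\{(\bftheta,\xx)\in\I^{2d}:\xx\in\frac1q(\pp+\bftheta+M(\psi(q)))\big\}$, over $q\in\N$ and $\pp\in Z_q$, form a fine cover of $\WW^d_\psi\cap\I^{2d}$. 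For each $q$, cover $M(\psi(q))$ by the hypercubes $\{B_{q,i}\}$ of Lemma~\ref{lem2}, writing $\ell_{q,i}$ for their side lengths, so that $\ell_{q,i}\gtrgtr\psi(q)$ and $\sum_i\ell_{q,i}^{\,s}\lessless\psi^{s-d+1}(q)$, and replace $M(\psi(q))$ by $B_{q,i}$ in the cover above. The key observation is that the inner cube $\frac1q(\pp+\bftheta+B_{q,i})$ has side length $\asymp\ell_{q,i}/q$ and depends on the inhomogeneous parameter $\bftheta$ with Lipschitz constant $1/q\le1$. Therefore, partitioning the $\bftheta$-cube $\I^d$ into $\asymp(q/\ell_{q,i})^d$ subcubes of side $\ell_{q,i}/q$, over each such subcube the whole set $\big\{(\bftheta,\xx):\xx\in\frac1q(\pp+\bftheta+B_{q,i})\big\}$ is contained in a single $2d$-dimensional cube of side $\asymp\ell_{q,i}/q$. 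This yields a cover of $\WW^d_\psi\cap\I^{2d}$ by $2d$-cubes of side $\asymp\ell_{q,i}/q\lessless1/q$, which is still a fine cover since these diameters tend to $0$. Using $|Z_q|\asymp q^d$ and $F(x)=x^df(x)$, its $F$-dimensional cost is, up to a constant,
\[
\sum_q\sum_{\pp\in Z_q}\sum_i\left(\frac{q}{\ell_{q,i}}\right)^d F\!\left(\frac{\ell_{q,i}}{q}\right)\;\asymp\;\sum_q q^d\sum_i f\!\left(\frac{\ell_{q,i}}{q}\right),
\]
and \eqref{fbound} applied with $x=\psi(q)/q$ (together with monotonicity of $f$) gives $f(\ell_{q,i}/q)\lessless(\ell_{q,i}/\psi(q))^sf(\psi(q)/q)$, so by Lemma~\ref{lem2} one has $\sum_if(\ell_{q,i}/q)\lessless\psi^{-s}(q)f(\psi(q)/q)\sum_i\ell_{q,i}^{\,s}\lessless\psi^{-d+1}(q)f(\psi(q)/q)$. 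The cost is therefore $\lessless\sum_q q^d\psi^{-d+1}(q)f(\psi(q)/q)<\infty$, and Lemma~\ref{bclem} gives $\HH^F(\WW^d_\psi\cap\I^{2d})=0$.

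\emph{The divergence case.} Assume the series diverges and $\psi$ is monotonic. The set $A:=\WW^d_\psi\cap\I^{2d}\subset\R^d\times\R^d$ is Borel, and for each $\bftheta\in\I^d$ its $\bftheta$-slice is precisely $\WW^d_{\psi,\bftheta}\cap\I^d$. Since the divergence hypothesis does not involve $\bftheta$, the divergence case of Theorem~\ref{thm3} gives $\HH^f(\WW^d_{\psi,\bftheta}\cap\I^d)=\infty$ for every $\bftheta\in\I^d$, so the set of such $\bftheta$ has full, hence positive, $\HH^d$-measure. Applying the Slicing Lemma (Lemma~\ref{lemmaslicing}) with $k=2d$, $\ell=d$, $g=f$, and the dimension function $r\mapsto r^df(r)=F(r)$, we conclude $\HH^F(A)=\infty$, as claimed.

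\emph{The main obstacle.} The divergence case and the bookkeeping for $F$ are routine; the one genuinely new point is the cover in the convergence case. The naive choice is to cover the $2d$-dimensional sets $\{(\bftheta,\xx):\xx\in\frac1q(\pp+\bftheta+B_{q,i})\}$ by cubes at the scale $\ell_{q,i}$, but this introduces spurious powers of $q$ into the $F$-cost and is not even a fine cover. The fix is to cover at the \emph{same} scale $\ell_{q,i}/q$ as in the proof of Theorem~\ref{thm3}; this works precisely because the inner cube moves only $(1/q)$-Lipschitzly in $\bftheta$, so the $d$ extra integration directions cost only a factor $(q/\ell_{q,i})^d$, and this factor exactly cancels the $(\ell_{q,i}/q)^d$ inside $F(\ell_{q,i}/q)=(\ell_{q,i}/q)^df(\ell_{q,i}/q)$, returning the singly metric sum $\sum_q q^d\psi^{-d+1}(q)f(\psi(q)/q)$.
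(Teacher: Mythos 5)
Your proposal is correct and follows essentially the same route as the paper: the divergence case via the Slicing Lemma applied to the $\bftheta$-slices together with Theorem \ref{thm3}, and the convergence case via the same fine cover by $2d$-dimensional cubes at scale $\diam(B_{q,i})/q$ (your partition of the $\bftheta$-cube into subcubes of side $\ell_{q,i}/q$ is exactly the paper's $A_{q,i,j}\times\frac{1}{q}(\pp+A_{q,i,j}+B_{q,i})$ construction), followed by the identical $F$-cost computation.
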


\begin{remark*}
It is reasonable to ask whether the monotonicity hypothesis is necessary in this theorem, or whether it can be removed as in \eqref{dibclem}. Though the Slicing Lemma appears unlikely to yield such an improvement, perhaps it could be achieved via other doubly metric techniques.
\end{remark*}

Theorem \ref{thm6} implies that for any approximating function $\psi$ with lower order at infinity $\tau$, we have

$$\dim_\HH \WW^d_\psi=\begin {cases}
 2d \ & {\rm if } \quad \tau \leq 1. \\[3ex]
 2d+\frac{1-\tau}{1+\tau} \ & {\rm if } \quad \tau>1.
 \end {cases}$$
 
Note that the limiting dimension $\lim_{\tau\to\infty}\dim_\HH \WW^d_\psi = 2d-1$ is never zero. 

\begin{proof}[Proof of Theorem \ref{thm6}]
The divergence case follows immediately upon combining Theorem \ref{thm3} with the Slicing Lemma just as in the proof of Theorem \ref{thm3}. To prove the convergence case, we use an argument similar to the proof of Theorem \ref{thm3}. As before we assume that
\begin{equation}
\sum\limits_{q=1}^{\infty} q^d \psi^{-d+1}(q) f\left(\frac{\psi(q)}{q}\right)<\infty,
\end{equation}
and we assume without loss of generality that $f(2r) \asymp f(r)$ for all $r$.\Footnote{If not, then let $g(r) = \inf_{n\in\N} n^{2d} f(r/n)$. Since every subset of $\R^{2d}$ of diameter $r$ can be covered by $\asymp n^{2d}$ sets of diameter $r/n$, we have $\HH^g(S) \asymp \HH^f(S)$ for all $S \subset \R^{2d}$ and thus it suffices to prove the theorem for the function $g$.} It is easily verified that
\begin{equation}
\WW^d_\psi\cap \I^{2d}= \bigcap_{N=1}^\infty\bigcup_{q=N}^\infty\bigcup_{\pp\in Z_q} \left\{(\bftheta,\xx) : \xx\in \frac{1}{q}\big(\pp + \bftheta + M(\psi(q))\big)\right\},
\end{equation}
where $Z_q$ is as before. Thus the collection
\[
\left\{\left\{(\bftheta,\xx) : \xx\in \frac{1}{q}\big(\pp + \bftheta + M(\psi(q))\big)\right\} : q\in \N ,\; \pp\in Z_q\right\}
\]
is a fine cover of $\WW^d_\psi\cap \I^{2d}$. Now for each $q$, let $\{B_{q,i} : i = 1,\ldots,M_q\}$ be a covering of $M(\psi(q))$ by $d$-dimensional hypercubes as in Lemma \ref{lem2}. For each $r > 0$, let $\{A_j(r) : j = 1,\ldots,N(r)\}$ be a covering of $\I^d$ by $N(r) \asymp r^{-d}$ $d$-dimensional hypercubes of diameter $r$. For shorthand, write $N_{q,i} = N(q^{-1} \diam(B_{q,i}))$ and $A_{q,i,j} = A_j(q^{-1}\diam(B_{q,i}))$. Then the collection
\[
\left\{A_{q,i,j} \times \frac{1}{q}\big(\pp + A_{q,i,j} + B_{q,i}\big) : q\in \N ,\; \pp\in Z_q,\; i = 1,\ldots,M_q, \; j = 1,\ldots,N_{q,i}\right\}
\]
is also a fine cover of $\WW^d_\psi\cap \I^{2d}$. Here $A+B$ denotes the Minkowski sum of two sets $A$ and $B$. The $F$-dimensional cost of this cover is\Footnote{For convenience the following calculations will be done with respect to the max norm on $\R^{2d}$.}
\begin{align*}
&\sum_{q\in\N} \sum_{\pp\in Z_q} \sum_{i = 1}^{M_q} \sum_{j=1}^{N_{q,i}} F\left(\diam\left(A_{q,i,j}\times\frac{1}{q}\big(\pp + A_{q,i,j} + B_{q,i}\big)\right)\right) \noreason\\
&= \sum_{q\in\N} \sum_{i = 1}^{M_q} (q+2)^d N_{q,i} F\left(\max\left(q^{-1} \diam(B_{q,i}),\frac{1}{q}\Big(q^{-1} \diam(B_{q,i}) + \diam(B_{q,i})\Big)\right)\right) \noreason\\
&\asymp \sum_{q\in\N} q^d \sum_{i = 1}^{M_q} (q^{-1}\diam(B_{q,i}))^{-d} F\left(\frac{1}{q}\diam(B_{q,i})\right) \since{$f(2r) \asymp f(r)$}\\
&= \sum_{q\in\N} q^d \sum_{i = 1}^{M_q} f\left(\frac{1}{q} \diam(B_{q,i})\right)\\
&\lessless \sum_{q\in\N} q^d \sum_{i = 1}^{M_q} \left(\frac{\diam(B_{q,i})}{\psi(q)}\right)^s f\left(\frac{\psi(q)}{q}\right). \by{\eqref{fbound} and \eqref{diambound}}
\end{align*}
The calculation in the proof of Theorem \ref{thm3} shows that this series converges and thus by the Hausdorff--Cantelli lemma, we have $\HH^F(\WW^d_\psi\cap \I^{2d}) = 0$.
\end{proof}

\section{Final comments and open problems}

Let $m\geq 1$ and $d\geq 1$ be integers. Let $\Psi:\Z^m\to \CO 0\infty$ be a  ``multivariable approximating function'', and fix $\bftheta\in\I^d$. Consider the set
\[
\WW^{md}_{\Psi,\bftheta}:=\left\{X:=(\mathbf x^{(1)},\ldots, \mathbf x^{(d)})\in \I^{md}: \prod_{i=1}^d\|\mathbf q\cdot\mathbf x^{(i)}-\theta_i\|<\Psi(\mathbf q)\quad {\rm for \ infinitely \ many} \ \mathbf q \in \Z^m\setminus\{\bf 0\}\right\}.
\]
Naturally one would like to establish a coherent metric theory for this set. To the best of our knowledge nothing has been proven for this set except a zero--one law for the homogeneous set $\WW^{md}_{\Psi,\bf0}$ which was proved in \cite{BHV3}. That is, 
\[
\HH^{md}(\WW^{md}_{\Psi,\bf0})\in\{0,1\}.
\]
The main tool used in establishing this result was the ``cross-fibering principle''  which made it possible to lift Cassel's and Gallagher's zero--one laws to the higher-dimensional multiplicative setup.
Beyond this result nothing is known. It can be verified that the set $\WW^{md}_{\Psi,\bftheta}$ has Lebesgue measure zero if the sum
\begin{equation}\label{eqcon}
\sum_{\qq\in  \Z^m\setminus\{\bf 0\}}\Psi^m(\mathbf q)\log^{d-1}(|\mathbf q|)
\end{equation}
converges. Here $|\qq|$ denotes the sup norm of $\qq$.  However, if \eqref{eqcon} diverges, then the situation is less clear. As in Theorem \ref{thm3}, it is reasonable to expect some monotonicity assumption for the divergence case. We therefore make the following conjecture:

\begin{conjecture}
Let $\Psi:\Z^m\to \CO 0\infty$, and suppose that $\Psi(\qq)=\psi(|\qq|)$ for some monotonic function $\psi$. Then
\[\HH^{md}(\WW^{md}_{\Psi,\bftheta})=1\quad {\rm if}\quad \eqref{eqcon} \ {\rm diverges}.\]
\end{conjecture}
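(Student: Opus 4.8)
Since $\HH^{md}$ is (a normalisation of) Lebesgue measure on $\I^{md}$, this statement lies in the ``Lebesgue'', or Gallagher--Cassels, regime rather than the genuinely fractional regime $s\in(d-1,d)$ treated by Theorem~\ref{thm3}; in particular the covering and Hausdorff--Cantelli technology used above is not the right tool here. Moreover, specialising to $m=1$ and $\Psi(\qq)=\psi(|\qq|)$ recovers exactly the set $\WW^d_{\psi,\bftheta}$ and the sum $\sum_q\psi(q)\log^{d-1}(q)$, so the conjecture contains the Beresnevich--Haynes--Velani conjecture as a special case and is therefore at least as hard; the hope for the $m\ge 2$ cases is that having $\asymp Q^{m-1}$ integer vectors $\qq$ on each level $|\qq|\asymp Q$ provides enough averaging to push an argument through. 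The plan has two stages: (i) prove a zero--full law $\HH^{md}(\WW^{md}_{\Psi,\bftheta})\in\{0,1\}$; (ii) prove that divergence of \eqref{eqcon} forces $\HH^{md}(\WW^{md}_{\Psi,\bftheta})>0$. Together with \eqref{eqcon}, stages (i) and (ii) give the conclusion.

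For stage (i) the natural mechanism is the cross-fibering principle of \cite{BHV3}, which was used there precisely to obtain the homogeneous zero--one law $\HH^{md}(\WW^{md}_{\Psi,\0})\in\{0,1\}$. Writing $X=(\mathbf x^{(1)},\dots,\mathbf x^{(d)})\in\I^{md}$ and fixing every block but the $i$-th, the resulting fibre of $\WW^{md}_{\Psi,\bftheta}$ is
\[
\Big\{\mathbf x^{(i)}\in\I^m:\ \|\qq\cdot\mathbf x^{(i)}-\theta_i\|<\Psi(\qq)\big/\textstyle\prod_{j\ne i}\|\qq\cdot\mathbf x^{(j)}-\theta_j\|\ \text{ for i.m. }\qq\Big\},
\]
which is the set of inhomogeneously $\Psi_i$-approximable points in $\I^m$ for the (fibre-dependent) approximating function $\Psi_i(\qq)=\min\{1,\Psi(\qq)/\prod_{j\ne i}\|\qq\cdot\mathbf x^{(j)}-\theta_j\|\}$. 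Such sets satisfy a zero--one law---this is the inhomogeneous analogue of Cassels' theorem for systems of linear forms---so every one-block fibre of $\WW^{md}_{\Psi,\bftheta}$ is null or full in $\I^m$. Feeding this into the cross-fibering machinery of \cite{BHV3} should yield $\HH^{md}(\WW^{md}_{\Psi,\bftheta})\in\{0,1\}$. The only point requiring care is that the cross-fibering argument was written for the homogeneous set; but the inhomogeneous shift acts within each fibre and does not interact with the fibering directions, so this adaptation should be routine.

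For stage (ii), set $A_\qq=\{X\in\I^{md}:\prod_{i=1}^d\|\qq\cdot\mathbf x^{(i)}-\theta_i\|<\Psi(\qq)\}$, so that $\WW^{md}_{\Psi,\bftheta}\cap\I^{md}=\limsup_{|\qq|\to\infty}A_\qq$. For fixed $\qq\ne\0$ the quantities $\|\qq\cdot\mathbf x^{(i)}-\theta_i\|$, $i=1,\dots,d$, are independent and uniformly distributed on $[0,1/2]$ as $X$ ranges over $\I^{md}$, whence one computes $\HH^{md}(A_\qq)\asymp\Psi(\qq)\log^{d-1}(1/\Psi(\qq))$; a manipulation using monotonicity of $\psi$, exactly as in Gallagher's theorem, shows that divergence of \eqref{eqcon} is equivalent to divergence of $\sum_\qq\HH^{md}(A_\qq)$. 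It then suffices to establish quasi-independence on average,
\[
\sum_{\qq,\qq':\ |\qq|,|\qq'|\le Q}\HH^{md}(A_\qq\cap A_{\qq'})\ \lessless\ \Big(\sum_{|\qq|\le Q}\HH^{md}(A_\qq)\Big)^2\qquad(Q\to\infty),
\]
from which the divergence Borel--Cantelli lemma gives $\HH^{md}(\WW^{md}_{\Psi,\bftheta})>0$, hence $=1$ by stage (i).

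The main obstacle is this overlap estimate. When $\qq,\qq'$ are, say, multiplicatively independent one expects $\HH^{md}(A_\qq\cap A_{\qq'})\asymp\HH^{md}(A_\qq)\HH^{md}(A_{\qq'})$, but making this rigorous for the fixed-$\bftheta$ product sets is precisely the difficulty that leaves the Beresnevich--Haynes--Velani conjecture open for $d\ge3$ (and for $d=2$ when $\theta_1\theta_2\ne0$): the product structure couples the coordinates and the inhomogeneous shift obstructs the clean Fourier and geometry-of-numbers estimates available in the homogeneous case. One might instead hope to induct on $d$ by fibering over the last block, but the fibrewise approximating function $\qq\mapsto\Psi(\qq)/\|\qq\cdot\mathbf x^{(d)}-\theta_d\|$ is not of the form $\psi(|\qq|)$, and since for a typical fibre it is comparable to $\Psi(\qq)$---supplying only a $\log^{d-2}$ factor rather than the needed $\log^{d-1}$---the missing logarithm would have to be extracted from the contribution of the atypical fibres, which is itself a Gallagher-type computation; this is why the direct second Borel--Cantelli route seems preferable. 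A realistic programme is to first settle $m\ge2$, where summing over a level set $\{|\qq|=Q\}$ supplies an additional averaging absent when $m=1$, and to import and globalise the correlation estimates of \cite{BHV4} from the $m=1$, $d=2$, $\theta_1=0$ case, lifting them to $m\ge2$ and $d\ge3$ via the cross-fibering principle. In all of this, stage (ii) is where essentially the entire difficulty resides; stage (i) should be a short adaptation of existing results.
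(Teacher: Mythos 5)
This statement is posed in the paper as an open conjecture, not a theorem: the paper offers no proof of it, and for good reason. Specialising to $m=1$ it is exactly the Beresnevich--Haynes--Velani conjecture, which at the time of writing is known only for $d=2$ and only when one coordinate of $\bftheta$ vanishes. Your proposal is therefore correctly diagnosed by you as a programme rather than a proof, and the honest assessment is that it contains a genuine, unclosed gap at precisely the point where the entire difficulty of the problem lives: the quasi-independence-on-average estimate
\[
\sum_{\qq,\qq'}\HH^{md}(A_\qq\cap A_{\qq'})\ \lessless\ \Bigl(\sum_{\qq}\HH^{md}(A_\qq)\Bigr)^2
\]
is asserted as the thing ``it then suffices to establish'', but no argument is given for it, and no such argument is currently known. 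The obstruction you name---that the product structure couples the $d$ blocks and the fixed inhomogeneous shift destroys the Fourier/geometry-of-numbers estimates available when $\bftheta=\0$---is real, and the hope that the extra averaging over $\asymp Q^{m-1}$ vectors with $|\qq|\asymp Q$ rescues the $m\ge 2$ case is plausible but unsubstantiated: the correlations between $A_\qq$ and $A_{\qq'}$ for $\qq,\qq'$ on the same level set are exactly the hard case (e.g.\ $\qq'$ a small perturbation or rational multiple of $\qq$), so the level-set averaging does not obviously help where help is needed.

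Stage (i) also deserves more caution than ``routine''. The cross-fibering principle of \cite{BHV3} is formulated for the homogeneous set, and the fibre sets you write down involve approximating functions $\Psi_i$ that depend measurably on the frozen coordinates and are in general non-monotonic; you would need an inhomogeneous Cassels-type zero--one law valid for arbitrary (non-monotonic) fibre functions, uniformly enough to feed into the fibering argument, and this is an adaptation that must actually be carried out rather than assumed. None of this is a criticism of the strategy---a second-moment attack combined with a zero--one law is the natural route, and is consistent with how the known partial results in \cite{BHV4} are obtained---but as it stands the proposal establishes nothing beyond a reduction of one open problem to another open problem, and should not be mistaken for a proof of the conjecture.
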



\medskip

Using methods similar to those used in this paper, one can show that if the series
\begin{equation}
\label{Psiconverges}
\sum_{\qq\in\Z^m} |\qq|^{md} \Psi^{-md+1}(\qq) f\left(\frac{\Psi(\qq)}{|\qq|}\right)
\end{equation}
converges, then
\[
\HH^f(\WW^{md}_{\Psi,\bftheta}\cap \I^{md}) = 0,
\]
where $f$ is a dimension function satisfying \eqref{fbound} for some $s\in (md-1,md)$ and $C > 0$. This leads us to the following complementary problem.
\begin{conjecture}
Let $\Psi(\qq) = \psi(|\qq|)$, where $\psi:\N\to \CO 0\infty$ is a monotonically decreasing function. Then if \eqref{Psiconverges} diverges and $x\mapsto x^{-md+1} f(x)$ is monotonically increasing, then $\HH^f(\WW^{md}_{\Psi,\bftheta}\cap \I^{md}) = \infty$.
\end{conjecture}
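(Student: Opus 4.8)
The plan is to imitate the proof of the divergence case of Theorem~\ref{thm3}, with the one-dimensional inhomogeneous set $\WW^1_{\psi,\theta_1}$ replaced by the inhomogeneous set attached to a single linear form in $m$ real variables, and with Bugeaud's Theorem~\ref{thm5} replaced by its $m$-variable analogue. First I would record the key inclusion. For a fixed $\theta\in\R$ put
\[
\WW^{m}_{\Psi,\theta}:=\Big\{\mathbf y\in\R^m:\ \|\qq\cdot\mathbf y-\theta\|<\Psi(\qq)\ \text{for infinitely many }\qq\in\Z^m\setminus\{\0\}\Big\}
\]
(this is just the $d=1$ instance of the sets under consideration). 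Writing $X=(\mathbf x^{(1)},\ldots,\mathbf x^{(d)})$ and using that $\prod_{i=1}^{d-1}\|\qq\cdot\mathbf x^{(i)}-\theta_i\|\le 2^{-(d-1)}\le 1$ for every $\qq$ and every choice of $\mathbf x^{(1)},\ldots,\mathbf x^{(d-1)}$, one sees that $X\in\WW^{md}_{\Psi,\bftheta}$ whenever its last block $\mathbf x^{(d)}$ lies in $\WW^{m}_{\Psi,\theta_d}$, regardless of the other blocks. Identifying $\R^{md}=\R^{m(d-1)}\times\R^m$ via $X\leftrightarrow\big((\mathbf x^{(1)},\ldots,\mathbf x^{(d-1)}),\mathbf x^{(d)}\big)$, this gives
\[
\I^{m(d-1)}\times\big(\WW^{m}_{\Psi,\theta_d}\cap\I^m\big)\ \subseteq\ \WW^{md}_{\Psi,\bftheta}\cap\I^{md}.
\]

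Next I would introduce the dimension function $g(r):=r^{-m(d-1)}f(r)$, so that $f(r)=r^{m(d-1)}g(r)$. Writing $g(r)=r^{m-1}\cdot\big(r^{-md+1}f(r)\big)$ exhibits $g$ as a product of non-negative non-decreasing functions (using $m\ge 1$ and the hypothesis that $r\mapsto r^{-md+1}f(r)$ is increasing), while \eqref{fbound} with $s\in(md-1,md)$ gives $g(r)\lessless r^{\,s-m(d-1)}$ with exponent $s-m(d-1)\in(m-1,m)$; in particular $g(r)\to 0$ as $r\to0$, $g$ satisfies the analogue of \eqref{fbound} with some $s'\in(m-1,m)$, and $r\mapsto r^{-m+1}g(r)=r^{-md+1}f(r)$ is increasing. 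A one-line computation from $f(r)=r^{m(d-1)}g(r)$ yields the identity
\[
q^{md}\psi^{-md+1}(q)\,f\!\left(\tfrac{\psi(q)}{q}\right)=q^{m}\psi^{-m+1}(q)\,g\!\left(\tfrac{\psi(q)}{q}\right),
\]
and hence, grouping the $\qq\in\Z^m$ according to $|\qq|$,
\[
\sum_{\qq\in\Z^m}|\qq|^{md}\Psi^{-md+1}(\qq)\,f\!\left(\tfrac{\Psi(\qq)}{|\qq|}\right)\ \asymp\ \sum_{q=1}^{\infty}q^{2m-1}\psi^{-m+1}(q)\,g\!\left(\tfrac{\psi(q)}{q}\right)\ \asymp\ \sum_{\qq\in\Z^m}|\qq|^{m}\Psi^{-m+1}(\qq)\,g\!\left(\tfrac{\Psi(\qq)}{|\qq|}\right).
\]
Thus divergence of \eqref{Psiconverges} is equivalent to divergence of the right-hand series.

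The crux of the argument is the following Jarn\'ik-type (general Hausdorff measure) statement for inhomogeneous approximation by a single linear form in $m$ variables: for monotonic $\psi$, any fixed $\theta$, and $g$ as above, divergence of $\sum_{\qq\in\Z^m}|\qq|^{m}\Psi^{-m+1}(\qq)\,g(\Psi(\qq)/|\qq|)$ forces $\HH^g(\WW^m_{\Psi,\theta}\cap\I^m)=\infty$. For $m=1$ this is precisely Bugeaud's Theorem~\ref{thm5}. Granting it, the conjecture follows from the Slicing Lemma (Lemma~\ref{lemmaslicing}) applied with $k=md$, $\ell=m(d-1)$, $k-\ell=m$, and $A=\I^{m(d-1)}\times(\WW^m_{\Psi,\theta_d}\cap\I^m)$ (a Borel set, for which $f(r)=r^{\ell}g(r)$): the fibre of $A$ over any point of $\I^{m(d-1)}$ equals $\WW^m_{\Psi,\theta_d}\cap\I^m$, which has infinite $\HH^g$-measure, so the set of base points with an infinite-$\HH^g$-measure fibre is all of $\I^{m(d-1)}$ and thus has positive $\HH^{m(d-1)}$-measure; hence $\HH^f(A)=\infty$, and by the inclusion above $\HH^f(\WW^{md}_{\Psi,\bftheta}\cap\I^{md})\ge\HH^f(A)=\infty$.

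The main obstacle is clearly the $m$-variable inhomogeneous Jarn\'ik-type theorem used as the crux. Its homogeneous counterpart is classical (Dickinson--Velani) and the case $m=1$ is Bugeaud's theorem; the general inhomogeneous statement is widely expected and should be deducible either from the inhomogeneous Khintchine--Groshev theorem for a single linear form combined with an inhomogeneous mass transference principle for linear forms, or directly from the ubiquitous systems framework of Beresnevich--Dickinson--Velani (in which the affine hyperplanes $\{\mathbf y:\qq\cdot\mathbf y-\theta\in\Z\}$ form a ubiquitous system). Isolating it in precisely the needed form --- with an arbitrary dimension function $g$ of ``dimension in $(m-1,m)$'' and only a monotonicity hypothesis on $\psi$ (which is exactly why the conjecture carries that hypothesis) --- is where the real work lies; the inclusion, the translation of the two $f$-volume sums, and the slicing step are all routine.
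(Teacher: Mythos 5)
First, be aware that the paper does not prove this statement: it is posed as an open conjecture, and the authors explicitly record the obstruction to the most natural attack, namely that slicing in the $\qq$-direction (restricting to $\qq\in\Z\times\{0\}^{m-1}$, which replaces $\WW^{md}_{\Psi,\bftheta}$ by a copy of $\WW^{d}_{\psi',\bftheta}$ for a different approximating function $\psi'$) replaces the sum \eqref{Psiconverges} over $\Z^m$ by a sum over $\Z$ that may well converge. Your slicing goes in the orthogonal direction --- you peel off the last factor of the multiplicative product and keep all of $\Z^m$ --- and this genuinely evades that obstruction: the inclusion $\I^{m(d-1)}\times(\WW^m_{\Psi,\theta_d}\cap\I^m)\subseteq\WW^{md}_{\Psi,\bftheta}$ is correct, $g(r)=r^{-m(d-1)}f(r)$ is a legitimate dimension function under hypothesis (II), the identity $q^{md}\psi^{-md+1}(q)f(\psi(q)/q)=q^m\psi^{-m+1}(q)g(\psi(q)/q)$ shows that no divergence is lost in passing to the fibre, and the application of Lemma~\ref{lemmaslicing} is exactly parallel to the paper's own divergence argument for Theorem~\ref{thm3}. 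This is a genuinely useful reduction which the paper does not make.

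However, what you have written is a reduction, not a proof. The entire difficulty is concentrated in your ``crux'': the inhomogeneous Jarn\'ik-type theorem for a single linear form in $m$ variables with a general dimension function $g$ of ``dimension'' in $(m-1,m)$, under only a monotonicity hypothesis on $\psi$ --- i.e.\ that divergence of $\sum_q q^{2m-1}\psi^{-m+1}(q)\,g(\psi(q)/q)$ forces $\HH^g(\WW^m_{\Psi,\theta}\cap\I^m)=\infty$. For $m=1$ this is Bugeaud's Theorem~\ref{thm5}; for $m\geq 2$ you assert it and gesture at two possible derivations (inhomogeneous Khintchine--Groshev plus a mass transference principle for linear forms, or the ubiquity framework) without carrying either out or citing a result in the precise form needed. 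Since the conjecture for $d=1$ \emph{is} this crux statement, your argument is vacuous at $d=1$ and incomplete for $d\geq 2$ until the crux is supplied. You have correctly located where the work lies, and it is worth checking whether the mass transference principle for linear forms of \cite{BeresnevichVelani} together with the classical inhomogeneous Khintchine--Groshev theorem for one linear form actually yields it under exactly these hypotheses; but as it stands the proposal leaves the essential step unproved, so the statement remains open.
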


Note that this conjecture does not follow from the Slicing Lemma, since although $\WW^{md}_{\Psi,\bftheta}$ contains certain slices of the form $\WW^d_{\psi,\bftheta}\times \I^{(m-1)d}$, the corresponding function $\psi:\N\to \CO 0\infty$ is necessarily different from $\Psi$ (since they have different domains) and it may happen that the series \eqref{Psiconverges} diverges whereas the analogous series for $\psi$ converges.

We also conjecture that in the doubly metric case, the monotonicity hypothesis on $\Psi$ can be removed:

\begin{conjecture}
Let $\Psi:\Z^m\to \CO 0\infty$ be any function, and let $f$ be a dimension function such that \eqref{Psiconverges} diverges and $x\mapsto x^{-md+1} f(x)$ is monotonically increasing. Let $F(x) = x^{md} f(x)$ and
\[
\WW^{md}_\Psi = \{(\bftheta,X) : X \in \WW^{md}_{\Psi,\bftheta}\}.
\]
Then $\HH^F(\WW^{md}_\Psi\cap \I^{2md}) = \infty$.
\end{conjecture}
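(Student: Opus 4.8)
The plan is to run the Slicing Lemma in the doubly metric direction and reduce to a fibrewise ubiquity statement, using the freedom in the inhomogeneous parameter to recover the full $\qq\in\Z^m$ sum --- the step that is unavailable in the singly metric conjecture. Write $\WW^{md}_\Psi\cap\I^{2md}=\limsup_{\qq}A_\qq$, where $A_\qq$ collects the pairs for which $\prod_{i}\|\qq\cdot\mathbf x^{(i)}-\theta_i\|<\Psi(\qq)$. Applying the Slicing Lemma (Lemma \ref{lemmaslicing}) with $\ell=md$, $k=2md$ and $g=f$, so that the total dimension function is $r^{md}f(r)=F(r)$, it suffices to fibre over the $md$ coordinates of $X$ and prove that for $\HH^{md}$-almost every $X$ the fibre $\{\bftheta:(\bftheta,X)\in\WW^{md}_\Psi\}$ has infinite $\HH^f$-measure. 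For fixed $X$ this fibre is a $\limsup$ of translates of the multiplicative region $M(\Psi(\qq))$ centred at the points $\mathbf u(\qq)=(\qq\cdot\mathbf x^{(1)},\dots,\qq\cdot\mathbf x^{(d)})\bmod 1$.

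The key observation is that for Lebesgue-almost every $X$ the centres $\{\mathbf u(\qq)\}_{\qq\in\Z^m}$ equidistribute, so that the fibre is a \emph{ubiquitous} system of multiplicative targets indexed by the whole of $\Z^m$. First I would establish the full-measure (Lebesgue) input for the enlarged targets: replace each $M(\Psi(\qq))$ by the region $M(\widetilde\Psi(\qq))$ whose $\HH^{md}$-cost matches the $\HH^f$-cost of $M(\Psi(\qq))$ from Lemma \ref{lem2} and the proof of Theorem \ref{thm3}, so that $\sum_\qq$ of the $\HH^{md}$-cost is comparable to \eqref{Psiconverges}; the hypotheses on $f$ (condition (I) for some $s\in(md-1,md)$ together with $x\mapsto x^{-md+1}f(x)$ increasing) guarantee $\widetilde\Psi\geq\Psi$, so the enlargement is genuine. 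The doubly metric Lebesgue zero--one law --- accessible \emph{without} monotonicity exactly as in \eqref{dibclem} and \cite[Ch.~VII]{Cassels}, the inhomogeneous freedom decoupling the events $A_\qq$ --- then makes the enlarged $\limsup$ full. Feeding this into the Mass Transference Principle \cite{BeresnevichVelani} in the $md$-dimensional fibre returns $\HH^f(\text{fibre})=\infty$ for almost every $X$, and the Slicing Lemma upgrades this to $\HH^F(\WW^{md}_\Psi\cap\I^{2md})=\infty$.

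The hard part will be the fibrewise transference itself. The targets $M(\Psi(\qq))$ are not balls but thickened unions of coordinate hyperplanes, so the ball form of the Mass Transference Principle does not apply off the shelf; one needs the version for neighbourhoods of affine subspaces adapted to the multiplicative geometry, together with the delicate log-factor bookkeeping that is the central difficulty of this paper (the factor $(N-d-k)^{d-2}$ in the proof of Lemma \ref{lem2}) --- and here it must be controlled \emph{uniformly} as $\qq$ ranges over all of $\Z^m$ rather than along a monotone one-parameter family. Equivalently, the obstacle is to prove a quantitative, correlation-controlled ubiquity estimate for the equidistributed multiplicative targets that is uniform in $X$; this is precisely where the extra degree of freedom of the doubly metric setting must be exploited in full, and it is what should allow the monotonicity hypothesis to be dropped. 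Should this ubiquity input prove intractable, the fallback is to slice off the inhomogeneous block instead and invoke the singly metric divergence statement fibrewise --- but that merely relocates the problem into the still-open singly metric multiplicative conjecture, which is why the direct ubiquity route is the one to pursue.
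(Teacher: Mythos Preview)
The statement you are attempting to prove is stated in the paper as a \emph{conjecture}, not a theorem; the paper offers no proof and explicitly lists it among the open problems in the final section. There is therefore no ``paper's own proof'' to compare against. Indeed, for the special case $m=1$ (the remark following Theorem~\ref{thm6}) the authors already note that ``the Slicing Lemma appears unlikely to yield such an improvement'' and that removing monotonicity would require ``other doubly metric techniques''.

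Your proposal is, as you yourself frame it, a plan rather than a proof, and the gaps you flag are genuine. The decisive step---a Mass Transference Principle for the multiplicative targets $M(\Psi(\qq))$ in the $\bftheta$-fibre---does not exist in the literature in the form you need: the available MTP results (balls, or neighbourhoods of a \emph{fixed} family of affine subspaces with a uniform codimension) do not cover unions of coordinate hyperplane thickenings whose shape varies with $\qq$ and whose $f$-cost involves the log-factor bookkeeping of Lemma~\ref{lem2}. Your ``enlargement'' to $\widetilde\Psi$ does not sidestep this, because after enlargement the target is still a multiplicative region, not a ball; matching $\HH^{md}$-cost to $\HH^f$-cost does not by itself license a transfer of full Lebesgue measure to infinite $\HH^f$-measure without a transference principle adapted to this geometry. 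Likewise, the ``doubly metric Lebesgue zero--one law without monotonicity'' you invoke for the enlarged system is asserted, not proved: the Cassels argument you cite establishes \eqref{dibclem} for the original $\Psi$, but you need it for the derived $\widetilde\Psi$ and with quantitative uniformity in $X$. In short, the outline is reasonable as a research programme, but the core ingredient is an unproved (and currently unavailable) multiplicative MTP, so the proposal does not close the conjecture.
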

Again, the convergence case analogue can be proven using the techniques of this paper.

%

\providecommand{\bysame}{\leavevmode\hbox to3em{\hrulefill}\thinspace}
\providecommand{\MR}{\relax\ifhmode\unskip\space\fi MR }
\providecommand{\MRhref}[2]{%
  \href{http://www.ams.org/mathscinet-getitem?mr=#1}{#2}
}
\providecommand{\href}[2]{#2}

\end{document}